\tikzstyle{vertex}=[circle, draw, inner sep=0pt, minimum size=4pt]
\newtheorem{theorem}{Theorem}[section]
\newtheorem{proposition}[theorem]{Proposition}
\newtheorem{lemma}[theorem]{Lemma}
\newtheorem{corollary}[theorem]{Corollary}
\theoremstyle{definition}
\newtheorem{definition}[theorem]{Definition}
\newtheorem{example}[theorem]{Example}
\newtheorem{problem}[theorem]{Problem}
\theoremstyle{remark}
\newtheorem{remark}[theorem]{Remark}
\newcommand{\sgn}{\mathrm{sgn}}
\newcommand{\srank}{\mathrm{srank}}
\title[Tensor slice rank and Cayley's first hyperdeterminant]
{Tensor slice rank and Cayley's first hyperdeterminant} 
\author[Alimzhan Amanov \and Damir Yeliussizov]{Alimzhan Amanov \and Damir Yeliussizov}
\address{KBTU, Almaty, Kazakhstan}
\email{\href{mailto:alimzhan.amanov@gmail.com}{alimzhan.amanov@gmail.com}, \href{mailto:yeldamir@gmail.com}{yeldamir@gmail.com}}
\begin{document}

\begin{abstract}
Cayley's first hyperdeterminant is a straightforward generalization of determinants for tensors. We prove that nonzero hyperdeterminants imply lower bounds on some types of tensor ranks. This result applies to the slice rank introduced by Tao and more generally to partition ranks introduced by Naslund.  
As an application, we show upper bounds on 
some generalizations of colored sum-free sets based on constraints related to order polytopes.
\end{abstract}

\maketitle


\section{Introduction}
 
\subsection{Tensors and ranks} Let 
$V$ be a 
vector space over a field $\mathbb{F}$. {\it Tensors} are elements of the space $V^{\otimes d} = {V \otimes \cdots \otimes V}$ ($d$ times). For a basis $(\mathbf{e}_i)$ of $V$ each tensor $\mathsf{T} \in V^{\otimes d}$ has a coordinate representation  
$$
\mathsf{T} = \sum_{i_1,\ldots, i_d} T(i_1,\ldots, i_d)\, \mathbf{e}_{i_1} \otimes \ldots \otimes \mathbf{e}_{i_d}.  
$$
Let $\dim(V) = n$ and we identify $\mathsf{T}$ with 
the corresponding function $T : [n]^{d} \to \mathbb{F}$
which we call a 
{\it $d$-tensor}, where $[n] := \{1,\ldots, n \}$. The next terms will be defined for $d$-tensors 
(but one can also do that in a coordinate-free way). 
Let $\mathsf{T}^d(n) := \{T : [n]^{d} \to \mathbb{F} \}$ be the  set of $d$-tensors. 

\vspace{0.5em}

We are going to 
consider several types of {\it tensor ranks} which can be described via the following general definition. 
For a subset $\mathsf{S} \subset \mathsf{T}^d(n)$ 
we define the {\it rank function} $\mathrm{rank}_{\mathsf{S}} : \mathsf{T}^d(n) \to \mathbb{N}$ 
as 
$$
\mathrm{rank}_{\mathsf{S}}(T) := \min\left\{r\, :\, 
T = \sum_{\ell = 1}^{r} T_{\ell}, \quad T_{\ell} \in \mathsf{S}  
\right\}
$$
and $\mathrm{rank}_{\mathsf{S}}(T) = 0$ iff $T = 0$,
assuming each tensor in $\mathsf{T}^d(n)$ can be represented as a sum of tensors from ${\mathsf{S}}$. 
In particular, $\mathrm{rank}_{\mathsf{S}}(T) = 1$ for $T \in {\mathsf{S}}$. 
We say that tensors in $\mathsf{S}$ are {\it simple}. 

\

Consider examples of rank functions whose definitions are based on choice of simple tensors.

\subsubsection*{\bf The rank} 
For the {\it tensor rank}, 
simple tensors are fully decomposable, i.e.
$T \in \mathsf{S}$ 
iff it has a form 
$$
T(i_1,\ldots, i_d) = \mathbf{v}_1(i_1) \cdots \mathbf{v}_d(i_d), \quad \forall\, i_1,\ldots, i_d \in [n]
$$
for some nonzero vectors $\mathbf{v}_i \in \mathbb{F}^n$. The tensor rank  of $T \in \mathsf{T}^d(n)$ is denoted by $\mathrm{rank}(T)$. It can be easily seen that $\mathrm{rank}(T) \le n^{d-1}$ and (by counting argument) 
that for most tensors $\mathrm{rank}(T) \ge n^{d-1}/d$. At the same time, 
the ranks of explicit tensors are mostly unknown.  
The notion of tensor rank goes back to Hitchcock \cite{hitch}, and it has many applications, see e.g. \cite{lands} for more on the subject. 

\subsubsection*{\bf The slice rank} 
For the {\it slice rank}, 
simple tensors are 
decomposable along some coordinate, i.e.  $T \in \mathsf{S}$ iff it has a form
$$
T(i_1,\ldots, i_d)  = \mathbf{v}(i_{k}) \cdot T_1(i_1,\ldots, i_{k - 1}, i_{k + 1}, \ldots, i_{d}), \quad \forall\, i_1,\ldots, i_d \in [n]
$$
for some $k \in [d]$, nonzero vector $\mathbf{v} \in \mathbb{F}^n$ and a $(d-1)$-tensor $T_1 \in \mathsf{T}^{d-1}(n)$. 
(Note that each simple tensor can be decomposed along a different coordinate $k$.)
The slice rank of $T \in \mathsf{T}^d(n)$ is denoted by $\srank(T)$. We clearly have the inequality
$$
\srank(T) \le \mathrm{rank}(T).
$$
In addition, since each $T \in \mathsf{T}^d(n)$ can always be expressed as the following sum of simple tensors 
$$
T(i_1,\ldots, i_d) = \sum_{\ell = 1}^n \delta(i_1, \ell)\, T(\ell, i_2,\ldots, i_d),
$$
where $\delta$ is the Kronecker delta function, we have the inequality
$$
\srank(T) \le n.
$$
The notion of slice rank is quite recent, due to Tao \cite{tao}, which he applied 
for a proof of the Croot--Lev--Pach \cite{clp} and Ellenberg--Gijswijt \cite{eg17} theorems on progression-free sets. Tao's lemma stating that the slice rank of diagonal tensors is full 
(given diagonal values are nonzero),  
found many 
applications, 
see \cite{st, blasiak, ns, mt, nas, sau} and  
the survey \cite{gro} with many more references therein. 

\subsubsection*{\bf The partition rank} 
For the {\textit{partition rank}}, which is a refinement of the previous notion, 
simple tensors are decomposable w.r.t. a partition of coordinates, i.e. $T \in \mathsf{S}$ iff it has a form
$$
T(i_1,\ldots, i_d)  = T_1(i_{a_1}, \ldots, i_{a_k}) \cdot T_2(i_{b_1},\ldots, i_{b_{d-k}}), \quad \forall\, i_1,\ldots, i_d \in [n]
$$
for some set partition $\{a_1 < \ldots < a_k \} \cup \{b_1 < \ldots < b_{d-k} \} = [d]$ 
and nonzero tensors $T_1 \in \mathsf{T}^{k}(n), T_2 \in \mathsf{T}^{d - k}(n)$. 
(Like for the slice rank,  partitions can be different for each simple tensor.)
The partition rank of $T \in \mathsf{T}^d(n)$ is denoted by $\mathrm{prank}(T)$. 
The notion of partition rank is due to Naslund \cite{nas} which he used to extend Tao's slice rank method. 
In particular, he showed that the partition rank of diagonal tensors is also full.

Furthermore, we define the {\textit{odd partition rank}}, denoted by $\mathrm{oprank}(T)$, which is the rank function with simple tensors as in the partition rank but conditioned that in a partition there must be an odd size set which does not contain the element~$1$.
For example, $T_1(i_1, i_2) \cdot T_2(i_3, i_4, i_5)$ has odd partition rank $1$ (since the partition has an odd size set $\{3,4,5 \}$ without the element $1$), while the tensor $T_1(i_1, i_2, i_3) \cdot T_2(i_4, i_5)$ can have a larger one.
We clearly have the inequalities
$$
\mathrm{prank}(T) \le \mathrm{oprank}(T) \le \mathrm{rank}(T), \qquad \mathrm{oprank}(T) \le n, 
$$
and additionally, the inequality
$$
\mathrm{oprank}(T) \le \srank(T), \quad \text{ if } d \text{ is even}.
$$

Note that for $d = 2$ all four rank versions in these examples coincide with the usual matrix rank. For $d = 3$, the notions of partition rank and slice rank coincide, and the odd partition rank is at least the slice rank. For $d = 4$, the odd partition rank coincides with the slice rank. 
Note also that 
all four rank versions are indeed different, see Example~\ref{exx} emphasizing this. 

\subsection{Hyperdeterminants}
The {\it hyperdeterminant} of $T \in \mathsf{T}^d(n)$ 
is 
defined as follows
\begin{align*}
\mathrm{det}(T) := 
\sum_{\sigma_{2}, \ldots, \sigma_{d} \in S_n} \mathrm{sgn}(\sigma_{2} \cdots \sigma_{d}) \prod_{i = 1}^n T(i, \sigma_{2}(i), \ldots, \sigma_{d}(i)).
\end{align*}

This definition is classical and was introduced by Cayley \cite{cay}. In literature, it is also referred to  Cayley's first\footnote{There is also {Cayley's second (geometric) hyperdeterminant} \cite{cay2} which we do not discuss here.} hyperdeterminant \cite{eg, cp, jz},  
or combinatorial hyperdeterminant \cite{lim1,lim2}. 
Like the matrix determinant, this function is also relative invariant under multilinear $\mathrm{GL}(n)$ transformations (for even $d$, see Prop.~\ref{propinv}), and it has other good properties which we shall discuss in this paper. 

Even though the given definition 
is nontrivial for all $d$, more significant properties (like full invariance) hold 
only for even $d$ (some reasons are visible in Sec.~\ref{sechyp}).  
For odd~$d$, the hyperdeterminant is usually set to be $0$ but we do not do so. For example, if $d = 3$ then $\det(T)$ 
is actually the {\it mixed discriminant} (e.g. \cite{bapat, gur}), 
see remark~\ref{mixd}. 
There are simple reductions allowing to embed tensors in larger dimensions while preserving  hyperdeterminants, 
 that way we can also obtain {\it hyperpermanents} as a special case, see remark~\ref{remhyp}. 

Many properties of hyperdeterminants and earlier historical notes are discussed in 
Sokolov's books \cite{sok1, sok2}.  
More recently, 
many formulas 
 related to matroids, symmetric functions, random matrices, and stochastic processes were shown to be hyperdeterminantal, see \cite{bar, lt, mats, eg, lv, lammers}.

\subsection{Bounding rank functions via hyperdeterminants} In this paper we relate hyperdeterminants 
with the types of ranks discussed above. More generally, we show that tensors with bounded rank function whose simple tensors satisfy some conditions, have zero hyperdeterminants.  
More specifically, 
we prove 
that hyperdeterminants vanish on tensors having non-full odd partition rank.

\begin{theorem}[=Theorem~\ref{sps}]\label{thmintro1}
Let 
$T\in \mathsf{T}^d(n)$ with $\mathrm{oprank}(T) < n$. Then $\det(T) = 0$.  
Equivalently, $\det(T) \ne 0$ implies that $\mathrm{oprank}(T) = n$ is full; in particular, $ \mathrm{rank}(T) \ge n$, and if $d$ is even then $\srank(T) = n$ is also full.  
\end{theorem}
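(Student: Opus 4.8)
The plan is to exploit that the hyperdeterminant is multilinear in the ``rows'' of $T$ along the first coordinate and alternating along each of the remaining coordinates, and to turn a non-full odd partition rank into a pigeonhole-plus-antisymmetrization argument. For $i \in [n]$ write $T[i] \in \mathsf{T}^{d-1}(n)$ for the slice $T[i](j_2,\ldots,j_d) := T(i,j_2,\ldots,j_d)$, and set
\[
D(S_1,\ldots,S_n) := \sum_{\sigma_2,\ldots,\sigma_d \in S_n} \sgn(\sigma_2\cdots\sigma_d)\prod_{i=1}^n S_i(\sigma_2(i),\ldots,\sigma_d(i)),
\]
so that $\det(T) = D(T[1],\ldots,T[n])$ and $D$ is linear in each of its $n$ arguments separately (each value of the first coordinate contributes exactly one factor to the product).

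First I would write $T=\sum_{\ell=1}^r T_\ell$ with $r=\mathrm{oprank}(T)<n$ and each $T_\ell$ an odd-partition simple tensor; passing to slices gives $T[i]=\sum_\ell T_\ell[i]$, and multilinearity of $D$ yields
\[
\det(T)=\sum_{f:[n]\to[r]} D\big(T_{f(1)}[1],\ldots,T_{f(n)}[n]\big).
\]
Since $r<n$, no $f$ is injective, so every term repeats a value $f(i)=f(i')=\ell$ with $i\ne i'$. Hence it suffices to prove: if two of the $n$ arguments of $D$ are slices $T_\ell[i],T_\ell[i']$ of one and the same odd-partition simple tensor, then that term vanishes.

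For the crux, decompose $T_\ell = A_\ell\cdot B_\ell$ along its partition $[d]=P\sqcup Q$, where $Q$ is the odd-sized block with $1\notin Q$, so $1\in P$ and $Q\subseteq\{2,\ldots,d\}$. Fixing the first coordinate to $i$ leaves $B_\ell$ (a function of the $Q$-coordinates only) untouched, so $T_\ell[i]$ and $T_\ell[i']$ carry the \emph{identical} factor $B_\ell$ on the coordinates of $Q$ and differ only in the $A_\ell$-factor, which is supported on $P\setminus\{1\}\subseteq\{2,\ldots,d\}\setminus Q$. I would then introduce the involution on the summation index of $D$ that precomposes $\sigma_j$ with the transposition $\tau=(i\ i')$ for every $j\in Q$ and leaves $\sigma_j$ fixed for $j\notin Q$. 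This swaps the $Q$-arguments of the factors at positions $i$ and $i'$ while fixing all other factors; because those two factors share $B_\ell$ and their $A_\ell$-parts live outside $Q$, the monomial $\prod_i S_i(\cdots)$ is preserved, whereas the sign is multiplied by $\sgn(\tau)^{|Q|}=(-1)^{|Q|}=-1$ as $|Q|$ is odd. Since $\tau\ne\mathrm{id}$, the involution is fixed-point-free, so it matches the terms of $D$ into cancelling pairs in any characteristic, giving $D=0$. Therefore every term in the expansion of $\det(T)$ vanishes and $\det(T)=0$. The stated consequences ($\mathrm{rank}(T)\ge n$, and $\srank(T)=n$ for even $d$) then follow from the inequalities $\mathrm{oprank}\le\mathrm{rank}$, $\mathrm{oprank}\le\srank\le n$ (for even $d$) recorded earlier.

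I expect the main obstacle to be isolating the correct involution and checking that it genuinely preserves the monomial. This hinges precisely on the two features singled out by the odd partition rank: that the shared block $Q$ avoids the first coordinate — so that fixing coordinate $1$ does not disturb $B_\ell$ and $D$ is honestly alternating in each coordinate of $Q$ — and that $|Q|$ is odd, so the accumulated sign is $-1$ rather than $+1$. Once this vanishing lemma is in place, the multilinear expansion and the pigeonhole count are routine.
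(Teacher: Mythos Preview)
Your argument is correct. The sign-reversing involution you use is, at its core, the same one the paper uses in Lemma~\ref{d20}: flip the $Q$-coordinates between two designated rows and observe that the shared $B_\ell$-factors swap while the $A_\ell$-factors stay put. The difference is how this local observation is promoted to the full theorem. The paper first shows that every oprank-$1$ tensor is $2$-null (all $2\times\cdots\times 2$ hyperminors vanish), then feeds this into a generic machinery: Lemma~\ref{dall} (Laplace expansion) propagates $2$-null to $m$-null for all $m\ge 2$, and Lemma~\ref{generic} (via the minor summation formula, Prop.~\ref{minorsum}) deduces $\det(T)=0$ from $\mathrm{rank}_{\mathsf{S}}(T)<n$ when simple tensors are $2$-null; the odd-$d$ case is then handled separately by the embedding $T\mapsto\widehat T$ into $\mathsf{T}^{d+1}(n)$. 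You bypass all of this scaffolding: by expanding $\det$ multilinearly in the rows along coordinate~$1$ and applying pigeonhole directly, you run the involution once on the full $n$-row product, and the argument is insensitive to the parity of $d$. Your route is more elementary and self-contained for this particular theorem; the paper's route is more modular, and that modularity is what lets it be reused with $k=p$ in place of $k=2$ to prove Theorem~\ref{ffthm} on the partition rank in positive characteristic.
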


In particular, this result applies to slice rank if $d$ is even: $\srank(T) < n$ implies $\det(T) = 0$. This property 
is 
a special case of a connection of the slice rank with {\it unstable} tensors from geometric invariant theory as shown in \cite{blasiak};  
in fact $\srank(T) < n$ implies that every $\mathrm{SL}(n)^d$ invariant homogeneous  polynomial vanishes on $T$ (here one works over algebraically closed fields, which is not necessary for our statement on hyperdeterminant\footnote{ The definition of hyperdeterminant and our results still work if we consider tensors as functions $T : [n]^d \to R$ over any commutative ring $R$.}); 
see also \cite{widg} for more on this connection. Note that specifically for hyperdeterminants the odd partition rank strengthens this property. 

An appealing feature of 
hyperdeterminants is that they are explicit and can be a good  
algebraic tool allowing computations using various operations,
see Sec.~\ref{sechyp}, which also includes methods for constructing tensors with nonzero hyperdeterminants.
On the other hand,  
the general problem of deciding if hyperdeterminant vanishes is NP-hard\footnote{As most tensor problems \cite{lim2}.}  
\cite{bar};\footnote{Barvinok \cite{bar} constructs a $4$-tensor whose hyperdeterminant computes the number of Hamiltonian paths in a directed graph. Note also that hyperdeterminants of $4$-tensors include permanents as a special case, see remark~\ref{remhyp}.}
 yet for some classes of (relatively sparse) tensors there are polynomial time algorithms \cite{cp}.

The statement in Theorem~\ref{thmintro1} does not hold for the partition rank, e.g. we  construct tensors $T$ over fields of characteristic $0$ with $\mathrm{prank}(T) = 1$ but $\det(T) \ne 0$, see Example~\ref{exx}.  
However, over fields of positive characteristic, which are of especial interest for applications of the slice rank method, we prove the following analogous result on the partition rank.

\begin{theorem}[=Theorem~\ref{ffthm}]
Let $T\in \mathsf{T}^d(n)$ over a field of characteristic $p > 0$ 
with $\mathrm{prank}(T) < n/(p - 1)$. Then $\det(T) = 0$. Equivalently, $\det(T) \ne 0$ implies $\mathrm{prank}(T) \ge n/(p - 1)$.  
\end{theorem}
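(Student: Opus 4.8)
The plan is to work directly with the hyperdeterminant and exploit that $n!\equiv 0 \pmod p$ once $n\ge p$, rather than to reduce $\mathrm{oprank}$ to $\mathrm{prank}$ at the level of tensors (which fails, since splitting an even block into odd pieces costs the full matrix rank of a flattening). Fix a partition-rank decomposition $T=\sum_{\ell=1}^{r}S_\ell$ with $r=\mathrm{prank}(T)<n/(p-1)$, where $S_\ell=T_1^{(\ell)}\cdot T_2^{(\ell)}$ is split along a bipartition $A_\ell\sqcup B_\ell=[d]$ with $1\in A_\ell$. Substituting into the definition of $\det$ and expanding the product over $i\in[n]$ yields
\[
\det(T)=\sum_{f\colon [n]\to[r]} D_f,\qquad D_f:=\sum_{\sigma_2,\dots,\sigma_d\in S_n}\sgn(\sigma_2\cdots\sigma_d)\prod_{i=1}^{n}S_{f(i)}\big(i,\sigma_2(i),\dots,\sigma_d(i)\big).
\]
Each $f$ induces blocks $I_\ell=f^{-1}(\ell)$ of sizes $n_\ell$ with $\sum_\ell n_\ell=n$. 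The arithmetic input is a pigeonhole bound: since $r(p-1)<n$, if every block satisfied $n_\ell\le p-1$ we would get $n=\sum_\ell n_\ell\le r(p-1)<n$, so every $f$ has at least one block with $n_\ell\ge p$. I would aim to show that each $D_f$ vanishes in characteristic $p$, which gives $\det(T)=0$.

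The mechanism to target is already visible for a single simple tensor. If $S=T_1(i_A)\cdot T_2(i_B)$ with $1\in A$, then both the sign and the product factor along $A$ and $B$, and in the $B$-block none of the summation coordinates is the diagonal one; substituting $\sigma_k=\tau_k\sigma_{k_0}$ for a fixed $k_0\in B$ frees one permutation and makes the $T_2$-product independent of $\sigma_{k_0}$, leaving the residual sign $\sgn(\sigma_{k_0})^{|B|}$. Summing the freed permutation gives $\sum_{\sigma_{k_0}\in S_n}\sgn(\sigma_{k_0})^{|B|}$, which equals $0$ when $|B|$ is odd (this is precisely the odd-partition cancellation behind Theorem~\ref{sps}) and equals $n!$ when $|B|$ is even, so in the even case
\[
\det(S)=n!\,\det(T_1)\,\det(T_2),
\]
vanishing in characteristic $p$ exactly when $n\ge p$. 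Thus the even-block analogue of the sign cancellation is a factorial, and block size $\ge p$ is the source of vanishing.

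The core of the argument is to run this free-permutation extraction block by block inside a fixed $D_f$: I would like to peel off, for each block $I_\ell$, a scalar governed by $n_\ell$ — a sign sum $0$ when $B_\ell$ is odd and a factorial $n_\ell!$ when $B_\ell$ is even — so that the guaranteed block of size $\ge p$ forces $D_f\equiv 0\pmod p$, while odd blocks contribute zero outright, making every $D_f$ vanish. The hard part, and the main obstacle, is that this factorization is not immediate: the maps $\sigma_2,\dots,\sigma_d$ are \emph{global} bijections of $[n]$, so the value $\sigma_k(i)$ for $i\in I_\ell$ is entangled with its values on the other blocks, and one cannot naively localize a freed permutation to a single block. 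The technical heart will therefore be a reindexing of the sum over $(\sigma_k)$ that decouples the blocks and exposes, for each even block, an independent symmetric-group factor of order $n_\ell!$ while tracking the accumulated signs; I expect this to go through either by an induction on $d$ that integrates out one even pair of coordinates at a time, or by a direct bijective rearrangement of the permutation sum. Once the per-block factorials are exposed, the pigeonhole bound $r(p-1)<n$ closes the argument.
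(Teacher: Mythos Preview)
Your strategy is the paper's: pigeonhole on block sizes forces some $m_\ell\ge p$, and that block is killed either by a sign cancellation (odd $|B_\ell|$) or by a factorial $m_\ell!$ (even $|B_\ell|$). The ``technical heart'' you leave open is exactly what the paper supplies as the minor summation formula (Proposition~\ref{minorsum}):
\[
\det\Bigl(\sum_{\ell=1}^{r}S^{(\ell)}\Bigr)\;=\;\sum_{\mathcal I_1,\dots,\mathcal I_r}\pm\,\prod_{\ell=1}^{r}\det\bigl(S^{(\ell)}_{\mathcal I_\ell}\bigr),
\]
summed over ordered partitions of $[n]$ chosen \emph{independently in each of the $d$ coordinate directions} (with $|\mathcal I_\ell|=m_\ell$, $\sum m_\ell=n$). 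This is your hoped-for ``direct bijective rearrangement of the permutation sum''; your $D_f$ is precisely the partial sum of those terms whose first-coordinate partition equals $(f^{-1}(\ell))_\ell$. Note in particular that $D_f$ is a \emph{sum} of such products, not a single product---so the picture of ``peeling off one scalar $n_\ell!$ per block from $D_f$'' is not quite right, and your free-permutation substitution cannot be localized to a single block because the coordinate $k_0\in B_{\ell^*}$ may lie in $A_\ell$ for other $\ell$. What actually vanishes is each individual \emph{factor} $\det(S^{(\ell)}_{\mathcal I_\ell})$ once $m_\ell\ge p$.

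The paper packages that last step as Lemma~\ref{ffprank} (every partition-rank-one tensor is $p$-null): for odd $|B_\ell|$ it is your sign-reversing involution (Lemma~\ref{d20}, already giving $2$-nullity), and for even $|B_\ell|$ one reorders coordinates so that the size-$m_\ell$ subtensor becomes an outer product $X\otimes Y$ and applies Proposition~\ref{thm:outerprod} to get $\det=m_\ell!\,\det(X)\det(Y)\equiv 0$ for $m_\ell\ge p$. Combining with Lemma~\ref{generic} (the minor-summation-plus-pigeonhole wrapper) gives the even-$d$ case; odd $d$ is handled by the padding $\widehat T(i_1,\dots,i_{d+1})=T(i_1,\dots,i_d)\,\delta(i_1,i_{d+1})$, which preserves both $\det$ and $\mathrm{prank}$. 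So your outline is correct but incomplete: the missing ingredient is Proposition~\ref{minorsum}, and once you have it the argument is cleaner if you reason about vanishing of size-$p$ sub-hyperdeterminants of each $S^{(\ell)}$ rather than trying to factor $D_f$ globally.
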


Let us also mention that 
hyperdeterminants are related to the {\it Alon--Tarsi conjecture} on latin squares \cite{at}, 
which can be formulated via certain nonzero hyperdeterminants \cite{zappa}.

\subsection{Order polytopes and colored sum-ordered sets} 
We show that `simplest' hyperdeterminants arise for tensors in what we call {\it $P$-echelon form} depending on a poset $P$, see Sec.~\ref{sec:pech}.
Such tensors are supported on lattice points of Stanley's {order polytopes} \cite{sta}. 
For a poset $P = ([d], <_P)$ on $[d]$ the {\it order polytope} $\mathcal{O}_t(P) \subset \mathbb{R}^d$ is defined as follows
$$
\mathcal{O}_t(P) := \left\{(z_1,\ldots, z_d) \in \mathbb{R}^d: 0 \le z_i \le t,\text{ and } z_i \le z_j \text{ for $i <_P j$} \right\}.
$$ 

We define a {\it $d$-colored sum-ordered set} in $\mathbb{F}^n_p$ (for prime $p$)  
of {\it size} $N$ 
as an ordered collection $(x^{(\ell)}_{i})_{\ell \in [d], i \in [N]}$  of vectors $x^{(\ell)}_{i} \in \mathbb{F}^n_p$ 
such that  
$
\sum_{\ell = 1}^d x^{(\ell)}_{i} = 0 
$
for $i \in [N]$, 
and for $i_1,\ldots, i_d \in [N]$ 
$$
\sum_{\ell = 1}^d x^{(\ell)}_{i_{\ell}} = 0
\implies 
(i_1, \ldots, i_d) \in \mathcal{O}_{N}(P)
$$
for some poset $P$ on $[d]$ with a connected Hasse diagram. 
This condition is a generalization of {\it colored sum-free sets} (see \cite{blasiak}) requiring 
$
\sum_{\ell = 1}^d x^{(\ell)}_{i_{\ell}} = 0
$ iff $i_1 = \ldots = i_d$.
Similar to colored sum-free sets  where the slice rank method applies \cite{blasiak}, we show that the maximal size $N$ of a $d$-colored sum-ordered set is exponentially smaller than $p^n$ assuming $d,p$ are constants.

\begin{theorem}[=Theorem~\ref{thm:caps}]\label{capsintro}
Let $d > 2$ 
and $N$ be size of a $d$-colored sum-ordered set in $\mathbb{F}^n_p$. 
Then $N = O(\gamma^n)$ for a constant $1 \le \gamma < p$.
\end{theorem}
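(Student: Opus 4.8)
The plan is to attach to the sum-ordered set a $\{0,1\}$-valued $d$-tensor $T \in \mathsf{T}^d(N)$ whose nonvanishing hyperdeterminant forces its partition rank to be large, while the polynomial (slice rank) method forces it to be small. Write $S_j(i_1,\ldots,i_d) = \sum_{\ell=1}^d (x^{(\ell)}_{i_\ell})_j \in \mathbb{F}_p$ for the $j$-th coordinate of $\sum_\ell x^{(\ell)}_{i_\ell}$, and set
$$
T(i_1,\ldots,i_d) := \prod_{j=1}^n \bigl(1 - S_j(i_1,\ldots,i_d)^{\,p-1}\bigr).
$$
By Fermat's little theorem each factor equals $1$ if $S_j = 0$ and $0$ otherwise, so $T(i_1,\ldots,i_d) = 1$ exactly when $\sum_\ell x^{(\ell)}_{i_\ell} = 0$ and is $0$ otherwise. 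The two defining properties of a sum-ordered set then say precisely that $T(i,\ldots,i) = 1$ for all $i \in [N]$ (so $T$ has all-ones diagonal) and that $T$ is supported on the lattice points of $\mathcal{O}_N(P)$, i.e. $T(i_1,\ldots,i_d) \neq 0$ forces $i_a \le i_b$ whenever $a <_P b$. It therefore suffices to sandwich a single rank of $T$ from both sides; I will use $\mathrm{prank}(T)$.

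The crux is to show $\det(T) \neq 0$, and here the connectedness of the Hasse diagram of $P$ enters. In the defining sum for $\det(T)$ set $\sigma_1 = \mathrm{id}$; a tuple $(\sigma_2,\ldots,\sigma_d)$ contributes a nonzero product only if $(\,i, \sigma_2(i),\ldots,\sigma_d(i)) \in \mathcal{O}_N(P)$ for every $i$, that is, only if $\sigma_a(i) \le \sigma_b(i)$ for all $i \in [N]$ and all $a <_P b$. For a covering relation $a \lessdot b$ this gives $\sigma_a(i) \le \sigma_b(i)$ for every $i$; since $\sum_i \sigma_a(i) = \sum_i \sigma_b(i) = \binom{N+1}{2}$, the termwise inequality must be an equality, so $\sigma_a = \sigma_b$. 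As the Hasse diagram is connected, propagating these equalities along its edges forces $\sigma_1 = \sigma_2 = \cdots = \sigma_d$, hence all equal to $\mathrm{id}$. Thus the only surviving term is the diagonal one, giving $\det(T) = \sgn(\mathrm{id}) \prod_i T(i,\ldots,i) = 1 \neq 0$. This is the step I expect to be the main obstacle, in that it is where the hypothesis on $P$ is genuinely used; the remaining inputs are the hyperdeterminant bound of Theorem~\ref{ffthm} and a standard counting estimate.

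Now I combine the two bounds. Since $T$ is defined over $\mathbb{F}_p$ and $\det(T) \neq 0$, Theorem~\ref{ffthm} applied in dimension $N$ yields the lower bound $\mathrm{prank}(T) \ge N/(p-1)$. For the upper bound, expand $T$: writing $\prod_j(1 - S_j^{p-1}) = \sum_{A\subseteq[n]}(-1)^{|A|}\prod_{j\in A} S_j^{p-1}$ and multinomially expanding each $S_j^{p-1} = (\sum_\ell (x^{(\ell)}_{i_\ell})_j)^{p-1}$, every resulting monomial factors as $\prod_{\ell=1}^d g_\ell(i_\ell)$, where the degree $\deg_\ell$ in the $\ell$-th block satisfies $\sum_\ell \deg_\ell = |A|(p-1) \le n(p-1)$. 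By pigeonhole some block has $\deg_\ell \le n(p-1)/d$, so grouping monomials by the minimizing block and by the resulting function $g_\ell$ gives
$$
\mathrm{prank}(T) \le \srank(T) \le d\cdot M, \qquad M := \#\Bigl\{(c_1,\ldots,c_n): 0\le c_j\le p-1,\ \textstyle\sum_j c_j \le \tfrac{n(p-1)}{d}\Bigr\}.
$$
Because $d > 2$, the threshold fraction $1/d$ is strictly below $1/2$, so a Chernoff (large-deviation) estimate gives $M = O(\gamma^n)$ for some constant $1 \le \gamma < p$. Combining, $N \le (p-1)\,\mathrm{prank}(T) \le (p-1)\,d\,M = O(\gamma^n)$, which is the claim (the constants $p,d$ are absorbed into the $O$). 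For even $d$ one could instead invoke Theorem~\ref{thmintro1} to get $\srank(T) = N$ directly; routing through $\mathrm{prank}(T)$ via Theorem~\ref{ffthm} has the advantage of covering odd $d$ (notably $d=3$) uniformly.
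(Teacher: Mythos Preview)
Your proof is correct and follows essentially the same route as the paper: define the $\{0,1\}$-tensor $T$ via the Fermat product, observe it is in $P$-echelon form with unit diagonal so $\det(T)=1$, apply Theorem~\ref{ffthm} for the lower bound $\mathrm{prank}(T)\ge N/(p-1)$, and bound $\srank(T)\le d\cdot M$ with the Chernoff estimate $M\le\gamma^n$. The only cosmetic difference is that you inline the computation $\det(T)=1$ (using the sum identity $\sum_i\sigma_a(i)=\sum_i\sigma_b(i)$ to force $\sigma_a=\sigma_b$ along Hasse edges) whereas the paper packages this as Proposition~\ref{echelon}; the arguments are equivalent.
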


This result is a variation on the theme of progression-free sets and 
Tao's slice rank method, 
combined with rank bounds discussed above. 

\section{Rank functions}
Recall that $\mathsf{T}^d(n) = \{T : [n]^{d} \to \mathbb{F} \}$. 
For $\mathsf{S} \subset \mathsf{T}^d(n)$ 
the {rank function} $\mathrm{rank}_{\mathsf{S}} : \mathsf{T}^d(n) \to \mathbb{N}$ 
is defined as 
$$
\mathrm{rank}_{\mathsf{S}}(T) = \min\left\{r\, :\, 
T = \sum_{\ell = 1}^{r} T_{\ell}, \quad T_{\ell} \in \mathsf{S} 
\right\}
$$
assuming 
it is well defined, and $\mathrm{rank}_{\mathsf{S}}(T) = 0$ iff $T = 0$. In particular, $\mathrm{rank}_{\mathsf{S}}(T) = 1$ for $T \in {\mathsf{S}}$. 
Tensors in $\mathsf{S}$ are called {simple} tensors. 

\begin{definition}[Multilinear product]
Let $T \in \mathsf{T}^d(n)$ and $A_1,\ldots, A_d \in \mathsf{T}^2(n)$ be $n \times n$ matrices. Define the {\it multilinear product}
$$
(A_1,\ldots, A_d) \cdot T = T' \in \mathsf{T}^d(n),
$$
where we have
$$
T'(i_1,\ldots, i_d) = \sum_{j_1,\ldots, j_d \in [n]} A_1(i_1,j_1) \cdots A_d(i_d,j_d)\, T(j_1,\ldots, j_d).
$$
This product simply expresses change of basis for a tensor. For $A_1,\ldots, A_d \in \mathrm{GL}(n)$ the multilinear product defines the natural $\mathrm{GL}(n)^d = \mathrm{GL}(n) \times \cdots \times \mathrm{GL}(n)$ ($d$ times) action on $\mathsf{T}^d(n)$.
\end{definition}

For $\mathcal{I} = (I_1,\ldots, I_d)$ where $I_1,\ldots, I_d \subseteq [n]$, we denote by $T_{\mathcal{I}} = (T(i_1,\ldots, i_d))_{i_1 \in I_1, \ldots, i_d \in I_d}$ the {\it subtensor} of $T$ restricted on $\mathcal{I}$. 

\begin{proposition}[Basic properties of rank functions]\label{rankbasic}
The following properties hold:

(i) Monotonicity: for $\mathcal{I} = (I_1,\ldots, I_d)$ with $I_1,\ldots, I_d \subset [n]$ with equal cardinalities 
$$
\mathrm{rank}_{\mathsf{S}_{\mathcal{I}}}(T_{\mathcal{I}}) \le  \mathrm{rank}_{\mathsf{S}}(T), 
$$
where $\mathsf{S}_{\mathcal{I}} = \{T_{\mathcal{I}} : T \in \mathsf{S} \}$ is the restriction of $\mathsf{S}$ on $\mathcal{I}$.

(ii) Sub-additivity:
$$
\mathrm{rank}_{\mathsf{S}}(T + T') \le \mathrm{rank}_{\mathsf{S}}(T) + \mathrm{rank}_{\mathsf{S}}(T').
$$

(iii) Invariance: 
Suppose $\mathsf{S}$ is $\mathrm{GL}(n)^d$ invariant. Then $\mathrm{rank}_{\mathsf{S}}$ is $\mathrm{GL}(n)^d$ invariant, i.e. 
$$
\mathrm{rank}_{\mathsf{S}}((A_1,\ldots, A_d) \cdot T) = \mathrm{rank}_{\mathsf{S}}(T), \qquad A_1,\ldots, A_d \in \mathrm{GL}(n).
$$
\end{proposition}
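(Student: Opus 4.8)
The plan is to derive all three properties from minimality in the definition of $\mathrm{rank}_{\mathsf{S}}$ together with linearity: a decomposition of an input tensor into $r$ members of $\mathsf{S}$ can be transported, term by term, into a decomposition of the output, after which the desired bound is immediate. For (i) and (iii) the transporting map additionally sends simple tensors to simple tensors (of the appropriate simple set), whereas (ii) only requires concatenating two decompositions.

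For (i), I would fix an optimal decomposition $T = \sum_{\ell=1}^{r} T_\ell$ with $T_\ell \in \mathsf{S}$ and $r = \mathrm{rank}_{\mathsf{S}}(T)$, and restrict every summand to $\mathcal{I}$. Since restriction to $\mathcal{I}$ is linear, $T_{\mathcal{I}} = \sum_{\ell=1}^r (T_\ell)_{\mathcal{I}}$, and each $(T_\ell)_{\mathcal{I}}$ lies in $\mathsf{S}_{\mathcal{I}}$ by the very definition $\mathsf{S}_{\mathcal{I}} = \{ T_{\mathcal{I}} : T \in \mathsf{S} \}$; hence $\mathrm{rank}_{\mathsf{S}_{\mathcal{I}}}(T_{\mathcal{I}}) \le r$. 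The one point needing care is that $\mathrm{rank}_{\mathsf{S}_{\mathcal{I}}}$ must be a legitimate rank function on the restricted index set, and this is precisely why the hypothesis demands $|I_1| = \cdots = |I_d|$: only then is $T_{\mathcal{I}}$ naturally an element of some $\mathsf{T}^d(m)$ with $m = |I_1|$, so the restricted quantity is defined in the same framework.

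For (ii), I would take optimal decompositions $T = \sum_{\ell=1}^r T_\ell$ and $T' = \sum_{\ell=1}^s T'_\ell$ and observe that their concatenation writes $T + T'$ as a sum of $r + s$ members of $\mathsf{S}$, whence $\mathrm{rank}_{\mathsf{S}}(T+T') \le r + s$ by minimality (the cases where $T$ or $T'$ vanishes being trivial). For (iii), I would use that the multilinear product $T \mapsto (A_1,\ldots,A_d)\cdot T$ is linear, so it sends $\sum_{\ell=1}^r T_\ell$ to $\sum_{\ell=1}^r (A_1,\ldots,A_d)\cdot T_\ell$, each summand of which lies in $\mathsf{S}$ by the assumed $\mathrm{GL}(n)^d$-invariance of $\mathsf{S}$; this yields $\mathrm{rank}_{\mathsf{S}}((A_1,\ldots,A_d)\cdot T) \le \mathrm{rank}_{\mathsf{S}}(T)$. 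The reverse inequality comes from applying this same bound to the tensor $(A_1,\ldots,A_d)\cdot T$ under the inverse action $(A_1^{-1},\ldots,A_d^{-1})$, which exists because the $A_i$ are invertible and which again preserves $\mathsf{S}$.

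I do not expect any genuine obstacle here: each part is immediate from linearity together with the relevant closure property of the simple set. What little there is to watch is bookkeeping rather than mathematics—confirming in (i) that $T_{\mathcal{I}}$ and $\mathsf{S}_{\mathcal{I}}$ inhabit a common space of equal-sized tensors, which forces the equal-cardinality hypothesis, and noting in (iii) that invertibility of the $A_i$ is exactly what upgrades the one-sided inequality into an equality.
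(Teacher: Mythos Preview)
Your proposal is correct and follows essentially the same approach as the paper: restrict an optimal decomposition termwise for (i), concatenate two optimal decompositions for (ii), and for (iii) push an optimal decomposition through the multilinear action and then apply the inverse to get the reverse inequality. The paper's proof is just a terser version of exactly these steps.
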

\begin{proof}
(i) Any decomposition $T = \sum_{\ell = 1}^r T_{\ell}$ where $T_{\ell} \in \mathsf{S}$ can be restricted to $\mathcal{I}$: $T_{\mathcal{I}} = \sum_{\ell = 1}^r (T_{\ell})_{\mathcal{I}}$ where $(T_{\ell})_{\mathcal{I}} \in \mathsf{S}_{\mathcal{I}}$. 
(ii) Take decompositions $T = \sum_{\ell = 1}^r T_{\ell}$ where $T_{\ell} \in \mathsf{S}$ and 
$T' = \sum_{\ell = 1}^{r'} T'_{\ell}$ where $T'_{\ell} \in \mathsf{S}$. 
Hence $\mathrm{rank}_{\mathsf{S}}(T + T') \le r + r'$.
(iii) Let 
$T = \sum_{\ell = 1}^r T_{\ell}$ where $T_{\ell} \in \mathsf{S}$. Using the linearity we have
$$T' := (A_1,\ldots, A_d) \cdot T = (A_1,\ldots, A_d) \cdot \sum_{\ell = 1}^r T_{\ell} = \sum_{\ell = 1}^r (A_1,\ldots, A_d) \cdot T_{\ell}$$
and  $(A_1,\ldots, A_d) \cdot T_{\ell} \in \mathsf{S}$ since $\mathsf{S}$ is $\mathrm{GL}(n)^d$ invariant. 
Hence $\mathrm{rank}_{\mathsf{S}}(T') \le \mathrm{rank}_{\mathsf{S}}(T)$. On the other hand, we also have
$$
T = (A_1^{-1}, \ldots, A_d^{-1}) \cdot T'
$$
which implies $\mathrm{rank}_{\mathsf{S}}(T) \le \mathrm{rank}_{\mathsf{S}}(T')$ and hence we have the equality. 
\end{proof}

\begin{corollary}\label{cor1}
The rank, the slice rank, and the (odd) partition rank are $\mathrm{GL}(n)^d$ invariant.
\end{corollary}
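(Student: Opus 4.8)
The plan is to reduce everything to Proposition~\ref{rankbasic}(iii). That result shows $\mathrm{rank}_{\mathsf{S}}$ is $\mathrm{GL}(n)^d$ invariant whenever the set $\mathsf{S}$ of simple tensors is $\mathrm{GL}(n)^d$ invariant, so it suffices to verify this invariance for each of the four choices of $\mathsf{S}$ corresponding to the rank, the slice rank, the partition rank, and the odd partition rank. Concretely, I would fix matrices $A_1,\ldots,A_d \in \mathrm{GL}(n)$, take a simple tensor $T \in \mathsf{S}$ of the relevant form, and check that $(A_1,\ldots,A_d)\cdot T$ again lies in $\mathsf{S}$.

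The common mechanism in all four cases is that the multilinear product distributes over the factored structure of a simple tensor, because the weight $\prod_k A_k(i_k,j_k)$ itself factors across coordinates. For the tensor rank, if $T(i_1,\ldots,i_d)=\mathbf{v}_1(i_1)\cdots\mathbf{v}_d(i_d)$ then
$$
((A_1,\ldots,A_d)\cdot T)(i_1,\ldots,i_d) = \prod_{k=1}^d \Big(\sum_{j_k} A_k(i_k,j_k)\,\mathbf{v}_k(j_k)\Big) = \prod_{k=1}^d (A_k\mathbf{v}_k)(i_k),
$$
which is again rank one. For the slice rank, writing $T = \mathbf{v}(i_k)\cdot T_1$ decomposed along coordinate $k$, the sum over $j_k$ transforms $\mathbf{v}$ into $A_k\mathbf{v}$, while the remaining sums apply the $(d-1)$-fold multilinear product by $(A_1,\ldots,\widehat{A_k},\ldots,A_d)$ to $T_1$; the result is decomposed along the same coordinate $k$. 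For the partition rank with partition $\{a_1,\ldots,a_k\}\cup\{b_1,\ldots,b_{d-k}\}$, the matrices indexed by the $a$'s act on $T_1$ and those indexed by the $b$'s act on $T_2$, so $(A_1,\ldots,A_d)\cdot T$ factors with respect to the same partition. The odd partition rank case is identical, and crucially the underlying partition is unchanged, so the constraint that some odd-size block avoids the element $1$ is automatically preserved.

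The only point requiring care, and the nearest thing to an obstacle, is verifying that the transformed factors remain nonzero, so that the image is genuinely a simple tensor of the required type rather than the zero tensor. This is exactly where invertibility of the $A_k$ enters: for a vector, $A_k\mathbf{v}\neq 0$ whenever $\mathbf{v}\neq 0$; for a lower-order tensor factor, the multilinear product by invertible matrices is itself a linear bijection of the relevant tensor space and hence sends nonzero tensors to nonzero tensors. With nonzeroness confirmed in each case, every such $\mathsf{S}$ is $\mathrm{GL}(n)^d$ invariant, and Proposition~\ref{rankbasic}(iii) yields the claim.
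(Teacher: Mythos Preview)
Your proposal is correct and follows essentially the same approach as the paper: both reduce to Proposition~\ref{rankbasic}(iii) by checking that the set $\mathsf{S}$ of simple tensors is $\mathrm{GL}(n)^d$ invariant, via the observation that the multilinear product distributes across a two-block factorization (the paper treats only the partition-rank shape, which subsumes the others). Your explicit discussion of why the transformed factors remain nonzero is a detail the paper omits but which indeed uses invertibility, so if anything you are slightly more careful here.
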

\begin{proof}
We show that the sets of simple tensors for the rank, the slice rank, and the (odd) partition rank are all $\mathrm{GL}(n)^d$ invariant. Hence all these rank functions will be $\mathrm{GL}(n)^d$ invariant. 
To see this, consider a simple tensor $T \in \mathsf{S} \subset \mathsf{T}^d(n)$ with a decomposition 
$$
T(i_1,\ldots, i_d) = T_1(i_{a_1}, \ldots, i_{a_k}) \cdot T_{2}(i_{b_1}, \ldots, i_{b_{d - k}})
$$
for a set partition $\{a_1,\ldots, a_k \} \cup \{b_1,\ldots, b_{d - k} \} = [d]$. Let $A_1,\ldots, A_d \in \mathrm{GL}(n)$ and
$$
T' = (A_1,\ldots, A_d) \cdot T,\qquad  T'_1 = (A_{i_{a_1}},\ldots, A_{i_{a_k}}) \cdot T_1, \qquad T'_2 = (A_{i_{b_1}},\ldots, A_{i_{b_{d-k}}}) \cdot T_2.
$$
Then we have
\begin{align*}
T'(i_1,\ldots, i_d) &= \sum_{j_1,\ldots, j_d} A_{1}(i_1,j_1) \cdots A_d(i_d, j_d)\, T(j_1,\ldots, j_d) \\
&= \sum_{j_{a_1},\ldots, j_{a_k}} A_{1}(i_{a_1},j_{a_1}) \cdots A_d(i_{a_k}, j_{a_k})\, T_1(j_{a_1},\ldots, j_{a_k}) \\
& \qquad\qquad \times \sum_{j_{b_1}, \ldots, j_{b_{d-k}}} A_{1}(i_{b_1},j_{b_1}) \cdots A_d(j_{b_{d-k}}, j_{b_{d-k}})\, T_2(j_{b_1},\ldots, j_{b_{d - k}})\\
&= T'_1(i_{a_1}, \ldots, i_{a_k}) \cdot T'_2(i_{b_1}, \ldots, i_{b_{d - k}}).
\end{align*}
Hence $T'  \in \mathsf{S}$ and $\mathsf{S}$ is $\mathrm{GL}(n)^d$ invariant. 
\end{proof}

\section{Hyperdeterminants}\label{sechyp}
In this section we discuss some basic properties of hyperdeterminants.
Recall that for $d$-tensor $T \in \mathsf{T}^d(n)$ the hyperdeterminant is given by 
$$
\mathrm{det}(T) = 
\sum_{\sigma_{2}, \ldots, \sigma_{d} \in S_n} \mathrm{sgn}(\sigma_{2} \cdots \sigma_{d}) \prod_{i = 1}^n T(i, \sigma_{2}(i), \ldots, \sigma_{d}(i)).
$$
Note that the definition gives a nontrivial function for all $d$, but we will use it predominantly for even $d$ 
due to properties shown below.

\subsection{Characterization via slices} 
We begin with a simple characterization which highlights few important properties.
\begin{definition}[Slices]
Let $T \in \mathsf{T}^d(n)$ be a $d$-tensor. For each {\it direction} $k \in [d]$, we define the {\it parallel} $(d-1)$-dimensional {\it slices} $T^{(k)}_1,\ldots, T^{(k)}_n$ of $T$ given by fixing the $k$-th coordinate: 
$$T^{(k)}_{\ell}(i_1,\ldots, i_{k-1}, i_{k+1}, \ldots, i_d) = T(i_1,\ldots, i_{k-1}, \ell, i_{k+1}, \ldots, i_d), \quad \ell = 1,\ldots, n.$$ 
\end{definition}

Let $I \in \mathsf{T}^d(n)$ be the {\it diagonal identity} tensor given by $I(i_1,\ldots, i_d) = 1$ if $i_1 = \cdots = i_d$, and $I(i_1,\ldots, i_d) = 0$, otherwise. 

\begin{proposition}[Characterization of hyperdeterminants]  
Let $\Delta : \mathsf{T}^d(n) \to \mathbb{F}$ be a function satisfying the following properties: 
\begin{itemize}
\item[(i)] Linearity: $\Delta$ is multilinear in each slice, i.e. if $T, T'$ differ only in a single slice, then 
$$
\Delta(T + T') = \Delta(T) + \Delta(T').
$$
In addition, if $T'$ is obtained from $T$ by multiplying a single slice by a scalar $c \in \mathbb{F}$, then 
$$
\Delta(T') = c\, \Delta(T).
$$
\item[(ii)] Skew-symmetry: If $T'$ is obtained from $T$ by swapping any two 
parallel slices, then $$\Delta(T) = -\Delta(T').$$ 
\item[(iii)] Normalization: For the diagonal identity tensor $I$   
we have 
$$\Delta(I) = 1.$$
\end{itemize}
If $d$ is even, then $\Delta = \det$ is the hyperdeterminant. If $d$ is odd, there is no such function. 
\end{proposition}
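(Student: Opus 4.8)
The plan is to adapt the classical uniqueness argument for the determinant, being careful to isolate the special role played by the distinguished first coordinate. The first goal is to show that the three axioms force $\Delta=\det$ \emph{for every} $d$, and only afterwards to locate the obstruction that appears when $d$ is odd.

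First I would expand $\Delta$ along the slices in direction $1$. Writing $T$ through its first-direction slices $T^{(1)}_1,\dots,T^{(1)}_n$, each a $(d-1)$-tensor, and decomposing every slice in the standard basis as $T^{(1)}_\ell=\sum_{\mathbf j}T(\ell,\mathbf j)\,E_{\mathbf j}$, multilinearity (i) gives
\begin{equation*}
\Delta(T)=\sum_{\mathbf j_1,\dots,\mathbf j_n\in[n]^{d-1}}\ \Big(\prod_{\ell=1}^n T(\ell,\mathbf j_\ell)\Big)\,\Delta(G),\qquad G:=\sum_{\ell=1}^n E_{(\ell,\mathbf j_\ell)},
\end{equation*}
where $E_{(\ell,\mathbf j_\ell)}$ is the elementary $d$-tensor with a single $1$ at position $(\ell,\mathbf j_\ell)$ and we write $\mathbf j_\ell=(j^{(2)}_\ell,\dots,j^{(d)}_\ell)$. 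The tensor $G$ is a $0/1$ tensor with exactly $n$ ones, at the distinct positions $(\ell,\mathbf j_\ell)$. The key reduction is to evaluate $\Delta(G)$. For a direction $k\in\{2,\dots,d\}$, if the map $\ell\mapsto j^{(k)}_\ell$ misses some value $m$, then the $m$-th slice in direction $k$ is identically zero, and the scaling part of (i) forces $\Delta(G)=0$. Hence $\Delta(G)\neq 0$ requires each $\sigma_k:\ell\mapsto j^{(k)}_\ell$ to be a permutation in $S_n$. In that case $G$ is a permutation tensor, and rearranging the slices in each direction $k\ge 2$ by $\sigma_k$ turns $G$ into the diagonal identity $I$; by skew-symmetry (ii) applied only in directions $2,\dots,d$ this multiplies $\Delta$ by $\prod_{k=2}^d\sgn(\sigma_k)=\sgn(\sigma_2\cdots\sigma_d)$, so normalization (iii) yields $\Delta(G)=\sgn(\sigma_2\cdots\sigma_d)$. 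Substituting back reproduces exactly the defining sum, i.e. $\Delta=\det$.

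It remains to decide when $\det$ itself satisfies all three axioms. One checks directly that $\det$ is multilinear in every slice and that $\det(I)=1$, and that swapping two parallel slices in any direction $k\ge 2$ negates $\det$ (the reindexing $\sigma_k\mapsto \pi\sigma_k$ by the transposition $\pi$ changes the sign of $\sgn(\sigma_2\cdots\sigma_d)$). The decisive computation is the behaviour under a swap of two slices in direction $1$: writing $T'(i_1,\dots,i_d)=T(\pi(i_1),i_2,\dots,i_d)$ and reindexing $i\mapsto\pi(i)$ moves the transposition onto each of $\sigma_2,\dots,\sigma_d$, producing
\begin{equation*}
\det(T')=(-1)^{\,d-1}\det(T).
\end{equation*}
For even $d$ this equals $-\det(T)$, so $\det$ satisfies (ii) in all directions and, by the uniqueness just proved, is the unique function with properties (i)--(iii). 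For odd $d$ the factor is $+1$: uniqueness still forces $\Delta=\det$, yet swapping two direction-$1$ slices of $I$ yields $\Delta(I')=-1$ by (ii) while $\det(I')=+1$, a contradiction (whenever $\operatorname{char}\mathbb F\neq 2$). Hence no such $\Delta$ exists.

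I expect the only genuine subtlety to be the sign bookkeeping in the two reindexing computations—specifically, recognizing that the uniqueness argument uses skew-symmetry only in directions $2,\dots,d$, so that the entire even/odd dichotomy is concentrated in the single factor $(-1)^{d-1}$ coming from direction-$1$ swaps; the degenerate characteristic-$2$ case should be flagged separately, since there symmetry and skew-symmetry coincide.
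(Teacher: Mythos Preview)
Your argument is correct and follows essentially the same route as the paper: expand by multilinearity into a sum over ``permutation tensors,'' evaluate those via skew-symmetry in directions $2,\dots,d$ and normalization, and then check whether $\det$ itself satisfies the axioms. The only notable difference is the odd-$d$ contradiction---the paper swaps the first two slices of $I$ successively in all $d$ directions to return to $I$ with accumulated sign $(-1)^d$, whereas you first invoke uniqueness to force $\Delta=\det$ and then exhibit the direction-$1$ swap where $\det$ fails (ii); your explicit caveat about characteristic~$2$ is a point the paper leaves implicit.
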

\begin{proof}
Let $T \in \mathsf{T}^d(n)$. From multilinearity (i) it follows that if $T$ has a zero slice (i.e. with all zero entries), then $\Delta(T) = 0$. Let us show that there is at most one such function $\Delta$ for each $d$. 
Consider an $\ell$-th slice $T^{(1)}_{\ell}$ of $T$ in the first direction $k = 1$. For each $i_2,\ldots, i_d \in [n]$, denote by $T_{\ell,i_2,\ldots, i_d}$ the $d$-tensor obtained from $T$ by replacing its slice $T^{(1)}_{\ell}$ with all zeros but leaving the single entry $T(\ell, i_2, \ldots, i_d)$ intact. From (i) we have 
\begin{align}\label{dsum}
\Delta(T) = \sum_{i_2,\ldots, i_d \in [n]} \Delta(T_{\ell, i_2,\ldots, i_d})
\end{align}
and each of the tensors $T_{\ell,i_2,\ldots, i_d} \in \mathsf{T}^d(n)$ has the $\ell$-th slice in the direction $k = 1$ filled with all zeros except for a single entry. In the same way, we further proceed to iteratively expand each summand in \eqref{dsum} along any other slice until we get a large sum of the form 
$$
\Delta(T) = \sum_{T'} \Delta(T')
$$
where each summand tensor $T'$ satisfies the following: in {\it each} slice, $T'$ has all zeros except for a single entry. Let us show that $\Delta(T')$ is determined uniquely. The non-zero entries of every such tensor $T'$ are $T'(i, \sigma_2(i), \ldots, \sigma_d(i))$  for $i \in [n]$ and some permutations $\sigma_2, \ldots, \sigma_d \in S_n$. Permuting the slices of $T'$ to get the diagonal form and using the skew-symmetry (ii) we obtain that
$$
\Delta(T') = 
\pm \Delta(D),
$$ 
where $D$ is a diagonal tensor such that $D(i,\ldots, i) = T'(i, \sigma_2(i), \ldots, \sigma_d(i))$ for $i \in [n]$ (and $D(i_1,\ldots, i_d) =0$ unless $i_1 = \ldots = i_d$). Now, using the scaling property in (i) and the normalization $\Delta(I) = 1$ we get 
$$
\Delta(D) = \prod_{i = 1}^n T'(i, \sigma_2(i), \ldots, \sigma_d(i)).
$$
Hence, we can see that $\Delta$ is determined uniquely. 

For even $d$, it is easy to check that the function $\Delta = \det$ satisfies all the conditions (i)-(iii). 

For odd $d$, let us show that there is no function $\Delta$ satisfying the conditions (ii) and (iii) simultaneously. Let us sequentially swap the first two slices of $I$ in each direction $k = 1,\ldots, d$. One can check that after applying this procedure the resulting tensor will be $I$ back again. On the other hand, the sign was changed $d$ times, which means that $\Delta(I) = (-1)^d \Delta(I) = -\Delta(I)$, a contradiction.  
\end{proof}

\begin{remark}
The statement we just proved is very similar to a well-known characterization of determinants, but appears to be new. 
\end{remark}

\begin{remark}
A similar characterization holds for odd $d$ as well, if we change the skew-symmetry condition (ii) by saying that it holds for slices except for the direction $1$, where 
we have the symmetry instead, i.e. swapping parallel slices in this direction does not change $\Delta$. 
\end{remark}

\begin{corollary}
Let $d$ be even. If one slice of $T$ is a scalar multiple of a parallel slice of $T$, then $\det(T) = 0$. In particular, if $T$ has slice rank $1$ and $n > 1$, then $\det(T) = 0$. More generally, if in some direction the parallel slices of $T$ are linearly dependent, then $\det(T) = 0$.
\end{corollary}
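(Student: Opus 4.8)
The plan is to derive all three assertions from the multilinearity~(i) and skew-symmetry~(ii) of $\det$, which hold here because $d$ is even (by the characterization proposition just proved). The cornerstone is the auxiliary fact that \emph{two equal parallel slices force the hyperdeterminant to vanish}: if $T^{(k)}_a = T^{(k)}_b$ for a direction $k$ and indices $a \ne b$, then swapping these two slices returns the very same tensor $T$, while by skew-symmetry~(ii) it negates $\det$, so $\det(T) = -\det(T)$ and $\det(T) = 0$. This argument gives $\det(T)=0$ whenever the characteristic is not $2$; to cover characteristic $2$ as well (the ``any commutative ring'' setting of the footnote) I would instead exhibit a fixed-point-free, sign-reversing involution on the index set of the defining sum. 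For $k=1$ one composes each of $\sigma_2,\dots,\sigma_d$ with the transposition $(a\,b)$ acting on the anchor index, which reverses $\sgn(\sigma_2\cdots\sigma_d)$ a total of $d-1$ times (odd, since $d$ is even) while leaving the associated monomial unchanged thanks to $T^{(k)}_a=T^{(k)}_b$; for $k\ge 2$ the analogous involution post-composing the single $\sigma_k$ with $(a\,b)$ does the same. In either case the terms cancel in pairs and $\det(T)=0$ over any ring.

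With this lemma the remaining cases are short. For a scalar multiple $T^{(k)}_a = c\,T^{(k)}_b$, let $\tilde T$ denote $T$ with its $a$-th slice replaced by $T^{(k)}_b$; the scaling part of~(i) gives $\det(T) = c\,\det(\tilde T)$, and $\tilde T$ has the two equal slices $a,b$, so $\det(\tilde T)=0$. The slice-rank-$1$ case specializes this: if $\srank(T)=1$ then $T(i_1,\dots,i_d) = \mathbf v(i_k)\,T_1(i_1,\dots,i_{k-1},i_{k+1},\dots,i_d)$ for some direction $k$, whence every parallel slice in direction $k$ equals $\mathbf v(\ell)\,T_1$; as $n>1$ there are two distinct such slices, each a scalar multiple of the other, and $\det(T)=0$.

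Finally, for the general linear-dependence claim I would reduce to the lemma by multilinearity. If $T^{(k)}_1,\dots,T^{(k)}_n$ are linearly dependent, some slice, say the $m$-th, is a combination $T^{(k)}_m = \sum_{\ell \ne m} \lambda_\ell\,T^{(k)}_\ell$ of the others; expanding $\det(T)$ in the $m$-th slice via~(i) yields $\det(T) = \sum_{\ell \ne m} \lambda_\ell\,\det(T^{[\ell]})$, where $T^{[\ell]}$ is $T$ with its $m$-th slice replaced by $T^{(k)}_\ell$. Each $T^{[\ell]}$ has the two equal slices at positions $m$ and $\ell$, so every summand vanishes and $\det(T)=0$. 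I expect the only genuinely delicate point to be the characteristic-$2$ form of the equal-slices lemma, where bare skew-symmetry yields only $2\det(T)=0$ and one must fall back on the pairwise cancellation in the explicit alternating sum; everything else is a formal consequence of~(i) and~(ii).
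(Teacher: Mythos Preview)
Your proof is correct and follows precisely the route the paper intends: the corollary is stated there without proof, as an immediate consequence of properties~(i) and~(ii) of the preceding characterization proposition, and your argument supplies exactly those standard details (equal slices $\Rightarrow$ vanishing, then reduce scalar multiples and general linear dependence to this case via multilinearity). Your extra care with the characteristic-$2$ situation, handled by an explicit sign-reversing involution on the defining sum, is a welcome refinement that the paper does not spell out but is needed for the ``any commutative ring'' claim in the footnote.
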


\subsection{Invariance} The following well-known property is classical, due to Cayley \cite{cay}. 
\begin{proposition}[Invariance]\label{propinv} 
Let $d$ be even. For $n \times n$ matrices $A_1,\ldots, A_d \in \mathsf{T}^2(n)$ we have 
\begin{align}\label{dprod}
\det((A_1,\ldots, A_d) \cdot T) = \det(A_1) \cdots \det(A_d) \cdot \det(T).
\end{align}
In particular, $\det$ is a relative $\mathrm{GL}(n)^d$ invariant, 
and hence is $\mathrm{SL}(n)^d$ invariant.
\end{proposition}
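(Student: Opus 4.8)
The plan is to exploit the characterization of $\det$ proved above—multilinearity (i) and skew-symmetry (ii) in the slices—together with the fact, recorded in the corollary, that $\det$ vanishes whenever two parallel slices coincide. The first reduction is to factor the multilinear product into single-direction operators. A direct expansion of the defining sum shows that
$$
(A_1,\ldots, A_d)\cdot T = (A_1, I, \ldots, I)\cdot\big((I, A_2, \ldots, A_d)\cdot T\big),
$$
and iterating this gives $(A_1,\ldots,A_d)\cdot T = B_1 \cdots B_d \cdot T$, where $B_k$ is the operator applying $A_k$ in direction $k$ and the identity elsewhere. Since the claimed right-hand side $\det(A_1)\cdots\det(A_d)$ also factors over the directions, it suffices to prove the single-direction identity $\det\big((I,\ldots,A,\ldots,I)\cdot T\big) = \det(A)\,\det(T)$, with $A$ placed in an arbitrary direction $k$.

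For the single-direction step, I would observe that applying $A$ in direction $k$ replaces the $n$ parallel slices $T^{(k)}_1,\ldots, T^{(k)}_n$ by the linear combinations $\sum_{m} A(\ell, m)\, T^{(k)}_m$, $\ell \in [n]$; indeed, the definition of the multilinear product gives exactly $T'(i_1,\ldots,i_d) = \sum_m A(i_k, m)\, T(i_1,\ldots, m, \ldots, i_d)$ in the $k$-th slot. Now, viewing $\det$ as a function of the tuple of slices in direction $k$, property (i) says it is multilinear, while the corollary (two equal parallel slices force $\det = 0$) says it is alternating. The classical identity for alternating multilinear forms then applies: expanding $\det$ multilinearly over the new slices and discarding the repeated-index terms leaves exactly a sum over permutations $\sigma \in S_n$ of $\sgn(\sigma)\prod_\ell A(\ell, \sigma(\ell))$ times $\det(T)$, which is $\det(A)\,\det(T)$.

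Assembling the single-direction identities along $k = 1, \ldots, d$ yields $\det((A_1,\ldots,A_d)\cdot T) = \det(A_1)\cdots\det(A_d)\,\det(T)$, as claimed; taking all $A_i \in \mathrm{SL}(n)$ (so each $\det(A_i)=1$) gives the $\mathrm{SL}(n)^d$ invariance, and the relative $\mathrm{GL}(n)^d$ invariance is just the displayed multiplicative formula. The main point requiring care is the alternating property used in the single-direction step: skew-symmetry (ii) delivers it immediately away from characteristic $2$, but to keep the argument valid over an arbitrary commutative ring (as the footnote envisions) one should invoke the vanishing-on-equal-slices form recorded in the corollary rather than deriving it from the sign change. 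I also expect a small amount of bookkeeping to confirm that the distinguished first direction in the definition of $\det$ behaves like the others here—this is precisely what the uniform skew-symmetry across all directions, established in the characterization for even $d$, guarantees.
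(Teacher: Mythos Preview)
Your argument is correct. Factoring the multilinear action into single-direction operators, identifying the direction-$k$ action as replacing the slices $T^{(k)}_\ell$ by the linear combinations $\sum_m A(\ell,m)\,T^{(k)}_m$, and then invoking multilinearity plus the alternating property of $\det$ in each family of parallel slices is exactly the standard route to \eqref{dprod}; your care in distinguishing ``alternating'' (vanishing on repeated slices) from ``skew-symmetric'' so that the argument survives characteristic~$2$ is also appropriate, and the vanishing on equal parallel slices can indeed be read off directly from the defining sum by the usual pairing $(\sigma_2,\ldots,\sigma_d)\leftrightarrow(\sigma_2\tau,\ldots,\sigma_d\tau)$ (for direction~$1$) or $\sigma_k\leftrightarrow\tau\sigma_k$ (for direction $k\ge 2$), where $\tau$ is the relevant transposition.

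There is nothing to compare against: the paper does not supply a proof of Proposition~\ref{propinv}. It records the statement as classical, attributing it to Cayley, and moves on. Your write-up would serve as the missing argument, and it dovetails with the slice characterization proved just before.
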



\begin{corollary}[Diagonal equivalent tensors]
Let $d$ be even and $A_1,\ldots, A_d \in \mathsf{T}^2(n)$. We have 
$$
\det(T) = \det(A_1) \cdots \det(A_d), \qquad T(i_1,\ldots, i_d) = \sum_{\ell = 1}^n A_1(i_1,\ell) \cdots A_d(i_d, \ell). 
$$
\end{corollary}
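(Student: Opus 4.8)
The plan is to recognize the tensor $T$ as the image of the diagonal identity tensor $I$ under the multilinear product by $(A_1,\ldots,A_d)$, and then to invoke the relative invariance from Proposition~\ref{propinv} together with the normalization $\det(I)=1$.

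First I would verify the identity $T = (A_1,\ldots,A_d)\cdot I$. Unwinding the definition of the multilinear product gives
$$
((A_1,\ldots,A_d)\cdot I)(i_1,\ldots,i_d) = \sum_{j_1,\ldots,j_d \in [n]} A_1(i_1,j_1)\cdots A_d(i_d,j_d)\, I(j_1,\ldots,j_d),
$$
and since $I(j_1,\ldots,j_d)$ equals $1$ exactly when $j_1=\cdots=j_d=\ell$ for some $\ell$ and $0$ otherwise, the sum collapses to $\sum_{\ell=1}^n A_1(i_1,\ell)\cdots A_d(i_d,\ell) = T(i_1,\ldots,i_d)$, which is precisely the tensor in the statement.

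Next, since $d$ is even, Proposition~\ref{propinv} applies with $T$ there taken to be $I$, yielding
$$
\det(T) = \det\big((A_1,\ldots,A_d)\cdot I\big) = \det(A_1)\cdots\det(A_d)\cdot\det(I).
$$
It then remains to observe that $\det(I)=1$: this is the normalization property in the characterization, and it also follows directly from the defining sum, where a nonzero product $\prod_i I(i,\sigma_2(i),\ldots,\sigma_d(i))$ forces $i=\sigma_2(i)=\cdots=\sigma_d(i)$ for all $i$, i.e. $\sigma_2=\cdots=\sigma_d=\mathrm{id}$, contributing sign $+1$ and product $1$. Combining these gives $\det(T) = \det(A_1)\cdots\det(A_d)$, as claimed.

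I expect essentially no obstacle here once $T$ is identified as $(A_1,\ldots,A_d)\cdot I$, since the entire content is packaged into the already-established invariance proposition. The only point requiring minor care is the bookkeeping in the collapse of the multilinear product, which relies precisely on the diagonal support of $I$; everything else is immediate.
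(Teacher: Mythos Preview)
Your proof is correct and follows exactly the paper's approach: recognize that $T = (A_1,\ldots,A_d)\cdot I$ and apply the invariance formula \eqref{dprod} together with $\det(I)=1$. You have simply spelled out the details that the paper leaves implicit.
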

\begin{proof}
Notice that $T = (A_1,\ldots, A_d) \cdot I$ and hence the result follows from \eqref{dprod}. 
\end{proof}

\begin{remark}
For even $d$, the hyperdeterminant 
is in fact the smallest degree $\mathrm{SL}(n)^d$ invariant homogeneous polynomial; here the space of degree $n$ invariants is one-dimensional and is generated by $\det$ \cite{luq}.
\end{remark}

\subsection{Summation formulas}

Recall that for a tensor $T$ and $I_1,\ldots, I_{d} \subset [n]$, we denote by $T_{I_1,\ldots, I_d}$ the subtensor $(T(i_1,\ldots, i_d))_{i_1 \in I_1,\ldots, i_d \in I_d}$. We also denote $\binom{[n]}{k} := \{S : S \subseteq [n], |S| = k \}$. 

\begin{proposition}[Minor summation formula \cite{lt}]\label{minorsum}
Let $d$ be even. Let $X, Y \in \mathsf{T}^d(n)$ be $d$-tensors. We have 
$$
\mathrm{det}(X + Y) = \sum_{k = 0}^n \sum_{I_1,\ldots, I_{d} \in \binom{[n]}{k}} \epsilon_{I_1,\ldots, I_{d}}\, \mathrm{det}(X_{I_1,\ldots, I_{d}}) \, \mathrm{det}(Y_{\overline{I_1},\ldots, \overline{I_{d}}}),
$$
where $\epsilon_{I_1,\ldots, I_{d}} = \sgn(\pi_{I_1} \cdots \pi_{I_d})$ and $\pi_{I_{\ell}} = (i_1,\ldots, i_k, j_{1}, \ldots, j_{n - k}) \in S_n$ for $I_{\ell} = \{i_1 < \cdots < i_{k} \}$ and $\overline{I_{\ell}} = [n] \setminus I_{\ell}= \{j_1 < \cdots < j_{n - k} \}$.
\end{proposition}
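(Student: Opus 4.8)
The plan is to work directly from the defining sum
$$\det(X+Y) = \sum_{\sigma_2,\ldots,\sigma_d \in S_n} \sgn(\sigma_2\cdots\sigma_d)\prod_{i=1}^n (X+Y)(i,\sigma_2(i),\ldots,\sigma_d(i))$$
and expand each factor $(X+Y)(\cdots) = X(\cdots)+Y(\cdots)$ by distributivity. This produces an outer sum over subsets $S\subseteq[n]$, where for $i\in S$ we keep the $X$-entry and for $i\notin S$ the $Y$-entry. Setting $I_1 := S$, the task reduces to showing that, for each fixed $S$, the inner sum over $\sigma_2,\ldots,\sigma_d$ reassembles into $\sum_{I_2,\ldots,I_d}\epsilon_{I_1,\ldots,I_d}\,\det(X_{I_1,\ldots,I_d})\,\det(Y_{\overline{I_1},\ldots,\overline{I_d}})$.

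The key regrouping is by the images $I_\ell := \sigma_\ell(S)$ for $\ell\ge 2$; note automatically $|I_\ell| = |S| =: k$, so all $d$ index sets in a given term share a common cardinality, which is why the stated double sum ranges only over equal-size tuples. For fixed $I_2,\ldots,I_d$, a permutation $\sigma_\ell$ with $\sigma_\ell(S) = I_\ell$ is exactly the data of a pair of block bijections $\alpha_\ell : S\to I_\ell$ and $\beta_\ell : \overline{S}\to\overline{I_\ell}$ (namely $\sigma_\ell|_S$ and $\sigma_\ell|_{\overline S}$). Reading these bijections off in the increasing orderings of the relevant sets yields permutations $\hat\alpha_\ell\in S_k$ and $\hat\beta_\ell\in S_{n-k}$, and conjugating $\sigma_\ell$ by the shuffle permutations $\pi_S,\pi_{I_\ell}$ of the statement gives the sign factorization $\sgn(\sigma_\ell) = \sgn(\pi_{I_\ell})\,\sgn(\pi_S)\,\sgn(\hat\alpha_\ell)\,\sgn(\hat\beta_\ell)$, since $\pi_{I_\ell}^{-1}\sigma_\ell\pi_S$ is block diagonal with blocks $\hat\alpha_\ell,\hat\beta_\ell$.

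Taking the product over $\ell=2,\ldots,d$ then splits $\sgn(\sigma_2\cdots\sigma_d)$ into three pieces: the shuffle signs $\sgn(\pi_S)^{d-1}\prod_{\ell=2}^d\sgn(\pi_{I_\ell})$, the $X$-block signs $\prod_{\ell=2}^d\sgn(\hat\alpha_\ell)$, and the $Y$-block signs $\prod_{\ell=2}^d\sgn(\hat\beta_\ell)$. Summing over all $\hat\alpha_\ell$ (resp. $\hat\beta_\ell$) against the matching products of entries reproduces precisely $\det(X_{I_1,\ldots,I_d})$ (resp. $\det(Y_{\overline{I_1},\ldots,\overline{I_d}})$), because the hyperdeterminant of each subtensor is its own permutation sum in the standard orderings. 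It remains to match the shuffle sign: since $d$ is even, $d-1$ is odd, so $\sgn(\pi_S)^{d-1} = \sgn(\pi_S) = \sgn(\pi_{I_1})$, whence $\sgn(\pi_S)^{d-1}\prod_{\ell=2}^d\sgn(\pi_{I_\ell}) = \prod_{\ell=1}^d\sgn(\pi_{I_\ell}) = \epsilon_{I_1,\ldots,I_d}$, completing the identification.

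The main obstacle is the sign bookkeeping of the third paragraph, and in particular the single place where evenness of $d$ is essential: the first direction carries no permutation in the hyperdeterminant, so $\sgn(\pi_S)$ enters with multiplicity $d-1$ rather than $d$, and only the parity $d$ even lets us absorb it into the symmetric factor $\epsilon_{I_1,\ldots,I_d}$. I would verify with care both the conjugation identity $\sgn(\sigma_\ell) = \sgn(\pi_{I_\ell})\,\sgn(\pi_S)\,\sgn(\hat\alpha_\ell)\,\sgn(\hat\beta_\ell)$ and the bijection between permutations $\sigma_\ell$ with $\sigma_\ell(S)=I_\ell$ and triples $(I_\ell,\hat\alpha_\ell,\hat\beta_\ell)$, since these are the points where sign errors typically arise.
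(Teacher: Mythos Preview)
Your proposal is correct and follows essentially the same approach as the paper: both expand the product, group terms by the subset $S=I_1$ of positions taking $X$ and then by the images $I_\ell=\sigma_\ell(S)$, factor each $\sgn(\sigma_\ell)$ into shuffle signs times block signs, and use the parity of $d$ to absorb the $(d-1)$ copies of $\sgn(\pi_S)$ into $\epsilon_{I_1,\ldots,I_d}$. Your phrasing via the conjugation identity $\pi_{I_\ell}^{-1}\sigma_\ell\pi_S$ being block diagonal is a slightly crisper packaging of the sign computation than the paper's two-step transformation $\sigma_\ell\to\sigma_\ell^X\sigma_\ell^Y\to\pi_{I_\ell}$, but the substance is identical.
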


\begin{proof}
	Let 
	$\sigma_1$ be the identity permutation. 	
We have
	\begin{align*}
		\det(X + Y) 
		&= \sum_{\sigma_2,\ldots, \sigma_d \in S^{}_{n}} \sgn(\sigma_2 \cdots \sigma_d) \prod_{i=1}^{n} \left(X({\sigma_{1}(i),\ldots, \sigma_{d}(i)}) + Y({\sigma_{1}(i),\ldots,\sigma_{d}(i)}) \right).
	\end{align*}
	Let us expand all the parentheses and group the terms, 
	such that for $J \in \binom{[n]}{k}$ the term  $$\prod_{j \in J} X({\sigma_{1}(j),\ldots,\sigma_{d}(j))}\prod_{j \not\in J} Y({\sigma_{1}(j),\ldots,\sigma_{d}(j)})$$ corresponds to the group $(I_1,\ldots,I_d)$, where $I_{\ell} = \{\sigma_{\ell}(j): j \in J\}$ for $\ell \in [d]$ (note that $I_{1} = J$).
	Decompose $\sigma_1,\ldots, \sigma_d$ into two tuples of permutations $\sigma^{X} = (\sigma^X_1,\ldots, \sigma^X_d) := (\sigma_{1}|_{J},\ldots,\sigma_{d}|_{J})$ and $\sigma^{Y} = (\sigma^Y_1,\ldots, \sigma^Y_d) :=(\sigma_{1}|_{\overline{J}},\ldots,\sigma_{d}|_{\overline{J}})$. 
	Let us check  that $$\sgn(\sigma_1 \cdots \sigma_d) = \epsilon_{I_{1},\ldots,I_{d}}\, \sgn(\sigma^{X})\, \sgn(\sigma^{Y}),$$ where $\sgn(\sigma^X) = \sgn(\sigma^X_1) \cdots \sgn(\sigma^X_d)$ and $\sgn(\sigma^Y)$ is defined similarly.  
	For  $\ell\in[2,d]$ 
	let us transform the permutation $\sigma_{\ell}$ to $\pi_{I_{\ell}}$ by adjacent transpositions as follows: first transform (i) $\sigma_{\ell} \to \sigma^{X}_{\ell} \sigma^{Y}_{\ell}$ (concatenation) and then (ii) $\sigma^{X}_{\ell}\sigma^{Y}_{\ell} \to \pi_{I_{\ell}}$. We have $\sgn(\sigma_{\ell}) = (-1)^{t_{}}\sgn(\pi_{I_{\ell}}),$ where $t_{}$ is the number of transpositions used during this procedure. 
	Let $t_1$ be the number of transpositions used in (i). 
	Note that $t_1$ depends only on the initial set $J = I_{1}$ and we have $(-1)^{t_1} = \sgn(\pi^{-1}_{I_1}) = \sgn(\pi_{I_1})$. Then (ii) contributes $\sgn(\sigma^{X}_\ell)\, \sgn(\sigma^{Y}_\ell)$. Overall, we have 
	$$\sgn(\sigma_{2}\ldots\sigma_{d}) = (-1)^{t_1(d-1)} \prod_{\ell=2}^{d} \sgn(\pi_{I_{\ell}})\,\sgn(\sigma^{X}_{\ell})\, \sgn(\sigma^{Y}_{\ell}) = \sgn(\pi_{I_{1}}\ldots\pi_{I_{d}})\, \sgn(\sigma^{X})\, \sgn(\sigma^{Y}).$$
	For $\mathcal{I} = (I_{1},\ldots,I_{d}) \in \binom{[n]}{k}^{d}$, denote $S_{}(\mathcal{I}) = \{ (\sigma_{1}(I_{1}),\ldots,\sigma_{d}(I_{d})) : \sigma_2, \ldots, \sigma_d \in S_n\}$. 
	So we have 
	{\small
	\begin{align*}
		\det(X &+ Y) = \sum_{k=0}^{n} 
		\sum_{\mathcal{I} \in \binom{[n]}{k}^d } 
		\sum_{{\sigma^{X} \in S_{}(\mathcal{I}),\atop \sigma^{Y} \in S_{}(\overline{\mathcal{I}})} } 
			\epsilon_{\mathcal{I}}\, \sgn(\sigma^{X})\, \sgn(\sigma^{Y}) 
			\prod_{i=1}^{k} X({\sigma^{X}_{1}(i),\ldots,\sigma^{X}_{d}(i)})
			\prod_{i=1}^{n-k} Y({\sigma^{Y}_{1}(i),\ldots,\sigma^{Y}_{d}(i)}) 
			 \\ 
		&=\sum_{k=0}^{n} 
		\sum_{\mathcal{I} \in \binom{[n]}{k}^d } \epsilon_{\mathcal{I}}  
			\sum_{\sigma^{X} \in S_{}(\mathcal{I})} \sgn(\sigma^{X})
			\prod_{i=1}^{k} X({\sigma^{X}_{1}(i),\ldots,\sigma^{X}_{d}(i)})
		\sum_{\sigma^{Y} \in S_{}(\overline{\mathcal{I}})} \sgn(\sigma^{Y}) 
			\prod_{i=1}^{n-k} Y({\sigma^{Y}_{1}(i),\ldots,\sigma^{Y}_{d}(i)})  
		\\
		&=\sum_{k=0}^{n} 
		\sum_{\mathcal{I} \in \binom{[n]}{k}^d } 
		\epsilon_{\mathcal{I}} \det(X_{\mathcal{I}}) \det(Y_{\overline{\mathcal{I}} } )
	\end{align*}}
	as claimed.
\end{proof}
\begin{corollary}[Laplace expansion \cite{bar, lt}]
Let $d$ be even and $T \in \mathsf{T}^d(n)$. 
We have
\begin{align}\label{lapl}
\det(T) = \sum_{i_2,\ldots, i_d \in [n]} (-1)^{1 + i_2 +  \ldots + i_d}\, T(1, i_2, \ldots, i_d)\, \det(T_{\overline{1}, \overline{i_2}, \ldots, \overline{i_d}}).
\end{align}
\end{corollary}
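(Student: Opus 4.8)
The plan is to deduce the Laplace expansion \eqref{lapl} directly from the minor summation formula (Proposition~\ref{minorsum}) by splitting $T$ along the first slice in direction $1$. Concretely, I would write $T = X + Y$, where $X$ agrees with $T$ on the entries with $i_1 = 1$ and is zero elsewhere, while $Y = T - X$ is supported on the slices $i_1 \in \{2,\ldots,n\}$. Applying Proposition~\ref{minorsum} to this decomposition gives
$$
\det(T) = \sum_{k=0}^n \sum_{I_1,\ldots,I_d \in \binom{[n]}{k}} \epsilon_{I_1,\ldots,I_d}\, \det(X_{I_1,\ldots,I_d})\, \det(Y_{\overline{I_1},\ldots,\overline{I_d}}).
$$

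The next step is to show that almost every term vanishes. Since $X$ is supported on the single slice $i_1 = 1$ in direction $1$, the subtensor $X_{I_1,\ldots,I_d}$ is identically zero unless $1 \in I_1$; and even when $1 \in I_1$, if $|I_1| = k \ge 2$ then $X_{I_1,\ldots,I_d}$ has an all-zero slice in direction $1$ (indexed by any element of $I_1$ other than $1$), so its hyperdeterminant vanishes because a tensor with a zero slice has zero hyperdeterminant (a consequence of multilinearity in each slice). The term $k = 0$ also vanishes, since there $\det(Y_{[n],\ldots,[n]}) = \det(Y)$ and $Y$ has the entirely zero slice $i_1 = 1$, forcing $\det(Y) = 0$. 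Hence only the terms with $k = 1$ and $I_1 = \{1\}$ survive.

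For such a surviving term, write $I_\ell = \{i_\ell\}$ for $\ell = 2,\ldots,d$. Then $\det(X_{\{1\},\{i_2\},\ldots,\{i_d\}})$ is simply the single entry $X(1,i_2,\ldots,i_d) = T(1,i_2,\ldots,i_d)$, while $\det(Y_{\overline{\{1\}},\overline{\{i_2\}},\ldots,\overline{\{i_d\}}}) = \det(T_{\overline{1},\overline{i_2},\ldots,\overline{i_d}})$, because $Y$ coincides with $T$ off the slice $i_1 = 1$. It remains to evaluate the sign: for a singleton $I_\ell = \{i_\ell\}$ the permutation $\pi_{I_\ell}$ moves $i_\ell$ to the front, so $\sgn(\pi_{I_\ell}) = (-1)^{i_\ell - 1}$, and multiplying over $\ell = 1,\ldots,d$ (with $i_1 = 1$) gives $\epsilon_{I_1,\ldots,I_d} = (-1)^{(1 + i_2 + \cdots + i_d) - d}$, which equals $(-1)^{1 + i_2 + \cdots + i_d}$ since $d$ is even. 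Substituting these three evaluations into the surviving sum yields exactly \eqref{lapl}.

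I expect the sign bookkeeping to be the only delicate point — in particular, tracking the parity of $\sgn(\pi_{I_\ell})$ and invoking the hypothesis that $d$ is even to absorb the $(-1)^{-d}$ factor. The vanishing of all other terms, by contrast, is immediate from the ``zero slice implies zero hyperdeterminant'' property. The one convention worth stating explicitly is that the hyperdeterminant of a $1 \times \cdots \times 1$ tensor is its single entry (and the empty $k = 0$ subtensor contributes a factor $1$), so that the identification of the surviving terms is unambiguous.
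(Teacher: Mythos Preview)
Your proposal is correct and is exactly the intended derivation: the paper states the Laplace expansion as an unproven corollary of Proposition~\ref{minorsum}, and your split $T = X + Y$ along the first slice in direction~$1$, together with the zero-slice vanishing and the sign computation, is precisely how one extracts it from the minor summation formula.
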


\subsection{Product formulas} Define the 
{\it outer tensor product} $\otimes : \mathsf{T}^{k}(n) \times \mathsf{T}^{d-k}(n)  \to \mathsf{T}^{d}(n)$ as
$$
T = X \otimes Y, \qquad 
T(i_1,\ldots, i_{d}) = X(i_1,\ldots, i_{k}) \cdot Y(i_{k+1},\ldots, i_{d}).
$$

\begin{proposition}\label{thm:outerprod}
Let $d - k$ be even and 
$X \in \mathsf{T}^k(n), Y \in \mathsf{T}^{d-k}(n)$. Then 
\begin{align}
\det(X \otimes Y) =  \det(X) \cdot \det(Y) \cdot n!
\end{align}
\end{proposition}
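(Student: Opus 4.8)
The plan is to expand both sides straight from the combinatorial definition of the hyperdeterminant and exploit the fact that both the defining sum and the sign are multiplicative. Writing $\det(X \otimes Y)$ out, each summand is indexed by $\sigma_2, \ldots, \sigma_d \in S_n$ and equals
$$
\sgn(\sigma_2 \cdots \sigma_d) \prod_{i=1}^n X(i, \sigma_2(i), \ldots, \sigma_k(i)) \cdot Y(\sigma_{k+1}(i), \ldots, \sigma_d(i)).
$$
Since $\sgn$ is a homomorphism, $\sgn(\sigma_2 \cdots \sigma_d) = \prod_{\ell=2}^d \sgn(\sigma_\ell)$, and the product over $i$ splits into an $X$-factor depending only on $\sigma_2,\ldots,\sigma_k$ and a $Y$-factor depending only on $\sigma_{k+1},\ldots,\sigma_d$. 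Hence the entire sum factors as a product of a sum over $\sigma_2,\ldots,\sigma_k$ and a sum over $\sigma_{k+1},\ldots,\sigma_d$.

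First I would identify the $\sigma_2,\ldots,\sigma_k$ block: term by term it is exactly the defining expression for $\det(X)$ as a $k$-tensor, because the first coordinate of $X$ runs over $i=1,\ldots,n$ unpermuted, matching the definition verbatim. So this block contributes precisely $\det(X)$ with no extra factors.

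The main work, and the main obstacle, is the $Y$-block
$$
\sum_{\sigma_{k+1}, \ldots, \sigma_d \in S_n} \prod_{\ell=k+1}^d \sgn(\sigma_\ell) \prod_{i=1}^n Y(\sigma_{k+1}(i), \ldots, \sigma_d(i)),
$$
where, unlike in the definition of $\det(Y)$, no coordinate is held fixed. To reduce to the definition I would substitute $\tau_\ell := \sigma_\ell \circ \sigma_{k+1}^{-1}$ for $\ell = k+2,\ldots,d$ while keeping $\sigma_{k+1}$ free; re-indexing by $j = \sigma_{k+1}(i)$ rewrites the inner product as $\prod_{j=1}^n Y(j, \tau_{k+2}(j), \ldots, \tau_d(j))$, which no longer depends on $\sigma_{k+1}$. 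The delicate point is the sign: from $\sgn(\sigma_\ell) = \sgn(\tau_\ell)\,\sgn(\sigma_{k+1})$ for $\ell \ge k+2$, the total sign collects a factor $\sgn(\sigma_{k+1})^{d-k}$, which equals $1$ precisely because $d-k$ is even — this is exactly where the parity hypothesis enters, and is the step I expect to require the most care.

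With the sign thus reduced to $\prod_{\ell=k+2}^d \sgn(\tau_\ell)$ and the substitution $(\sigma_{k+2},\ldots,\sigma_d) \leftrightarrow (\tau_{k+2},\ldots,\tau_d)$ being a bijection of $S_n^{\,d-k-1}$ for each fixed $\sigma_{k+1}$, the inner sum over the $\tau_\ell$ is exactly $\det(Y)$ (as a $(d-k)$-tensor, using its $d-k-1$ permutation variables), while the remaining free sum over $\sigma_{k+1} \in S_n$ contributes the factor $n!$. Multiplying the two blocks then yields $\det(X \otimes Y) = \det(X) \cdot \det(Y) \cdot n!$, as claimed.
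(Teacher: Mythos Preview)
Your proof is correct and follows essentially the same approach as the paper: expand the hyperdeterminant of $X\otimes Y$ and factor the resulting sum into the $X$-block (giving $\det(X)$) and the $Y$-block (giving $n!\cdot\det(Y)$). The only difference is that the paper dismisses the evaluation of the $Y$-block as ``easy to see,'' whereas you spell out the substitution $\tau_\ell=\sigma_\ell\sigma_{k+1}^{-1}$ and the sign bookkeeping explicitly --- your treatment of where the parity hypothesis $d-k$ even enters is in fact more transparent than the paper's.
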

\begin{proof}
We have 
{\small
\begin{align*}
\det(X \otimes Y) &= \sum_{{\sigma_2, \ldots, \sigma_k \in S_n \atop \pi_1, \ldots, \pi_{d-k} \in S_n} } \sgn(\sigma_2 \cdots \sigma_k\, \pi_1 \cdots \pi_{d-k})  \prod_{i = 1}^n X(i, \sigma_2(i), \ldots, \sigma_k(i)) \cdot Y(\pi_1(i), \ldots, \pi_{d-k}(i)) \\
&= \left(\sum_{{\sigma_2, \ldots, \sigma_k \in S_n} } \sgn(\sigma_2 \cdots \sigma_k)  \prod_{i = 1}^n X(i, \sigma_2(i), \ldots, \sigma_k(i)) \right) \\
&\qquad\qquad\qquad \times \left(\sum_{{\pi_1, \ldots, \pi_{d-k} \in S_n} } \sgn(\pi_1 \cdots \pi_{d-k}) \prod_{i = 1}^n Y(\pi_1(i),  \ldots, \pi_{d-k}(i)) \right) 
\end{align*}
}
It is easy to see that for even $d-k$ we have 
$$
\sum_{{\pi_1, \ldots, \pi_{d-k} \in S_n} } \sgn(\pi_1 \cdots \pi_{d-k}) \prod_{i = 1}^n Y(\pi_1(i),  \ldots, \pi_{d-k}(i)) = \det(Y) \cdot n!
$$
Hence the formula follows. 
\end{proof}

\subsection{Remarks} 

\subsubsection{More product formulas}\label{moreprod}
There are at least two more product formulas for hyperdeterminants. First, define the {\it contraction product} $\circ : \mathsf{T}^{d}(n) \times \mathsf{T}^{k}(n)  \to \mathsf{T}^{d + k - 2}(n)$ given by
$$
T = X \circ Y, \qquad 
T(i_1,\ldots, i_{d-1}, j_1,\ldots, j_{k-1}) = \sum_{\ell = 1}^n X(i_1,\ldots, i_{d-1}, \ell)\, Y(\ell, j_1,\ldots, j_{k-1}).
$$
Then it is well known (e.g. \cite{sok2, luq}) that for even $d, k$ we have 
$$
\det(X \circ Y) = \det(X) \cdot \det(Y), \qquad X \in \mathsf{T}^d(n), Y \in \mathsf{T}^{k}(n).
$$
Next, define the {\it direct sum} $\oplus : \mathsf{T}^d(n) \times \mathsf{T}^d(m) \to \mathsf{T}^{d}(n + m)$ given by 
$$
T = X \oplus Y, \qquad 
T(i_1,\ldots, i_d) = 
\begin{cases} 
	X(i_1,\ldots, i_d), & \text{ if } i_1,\ldots, i_d \in [n],\\
	Y(i_1,\ldots, i_d), & \text{ if } i_1,\ldots, i_d \in [n+1,\ldots, n + m], \\
	0, & \text{ otherwise}.
\end{cases}
$$
Then for even $d, k$ we also have
$$ 
\det(X \oplus Y) = \det(X) \cdot \det(Y), \qquad X \in \mathsf{T}^d(n), Y \in \mathsf{T}^{d}(m).
$$ 
This formula follows from the minor summation formula in Prop.~\ref{minorsum}.
All the discussed product formulas can be useful for constructing tensors with nonzero hyperdeterminants.

\subsubsection{Reductions and the odd $d$ case} Let $d > 2$ 
and $T \in \mathsf{T}^d(n)$. 
Assume $T(i_1,\ldots, i_d) = 0$ unless  $i_{1} = i_2$. Then it is easy to see that 
$$\det(T) = \det(T'), \qquad T' \in \mathsf{T}^{d - 1}(n), \quad T'(i_2,\ldots, i_{d}) = T(i_2, i_2, i_3, \ldots, i_{d}).$$
In particular, this reduction shows that the properties of hyperdeterminants for even $d$, can be translated to analogous properties for odd $d$. This feature will be useful in the next section.

\subsubsection{Mixed discriminants}\label{mixd} For $n \times n$ matrices $A_1, \ldots, A_n$  the {\it mixed discriminant}  
$D(A_1, \ldots, A_n)$ is  defined as follows 
$$
D(A_1, \ldots, A_n) = \frac{\partial^n}{\partial z_1 \cdots \partial z_n} \det(z_1 A_1 + \ldots + z_n A_n). 
$$
Let 
$T \in \mathsf{T}^{3}(n)$. Then we have
$$
\det(T) = D(T_1, \ldots, T_n), \text{ where } T_{\ell}(i,j) = T(\ell, i, j), \quad \ell, i,j \in [n].
$$

\subsubsection{Hyperpermanents}\label{remhyp} For $X \in \mathsf{T}^k(n)$ the {\it hyperpermanent} $\mathrm{per}(X)$ is defined as follows
$$
\mathrm{per}(X) = \sum_{\sigma_2, \ldots, \sigma_k \in S_n} \prod_{i = 1}^n X(i, \sigma_2(i), \ldots, \sigma_k(i)). 
$$
Let $d$ be even, $T \in \mathsf{T}^d(n)$ and assume $T(i_1,\ldots, i_d) = 0$ unless $i_1 = i_2, i_3 = i_4, \ldots, i_{d-1} = i_d$. Then is easy to see that 
$$\det(T) = \mathrm{per}(Y), \qquad Y \in \mathsf{T}^{d/2}(n), \quad Y(j_1, \ldots, j_{d/2}) = T(j_1, j_1,  \ldots, j_{d/2}, j_{d/2}).$$
In particular, for $d = 4$ we have $\det(T) = \mathrm{per}(Y)$ is the matrix permanent. 
Hyperdeterminants of $4$-tensors 
cover mixed discriminants, permanents, and determinants as special cases.

\section{Bounding rank functions via hyperdeterminants}\label{sec:generic}
\subsection{Generic case} 
We say that $T \in \mathsf{T}^d(n)$ is {\it $k$-null} if $\det(T_{I_1, \ldots, I_d}) = 0$ for all $I_1,\ldots, I_d \in \binom{[n]}{k}$. 

\begin{lemma}\label{generic}
Let $d$ be even. Suppose each simple tensor in $\mathsf{S}$ is $k$-null for $k > 1$.
Let $T \in \mathsf{T}^d(n)$ be a tensor with $\mathrm{rank}_{\mathsf{S}}(T)  < n/(k - 1)$. Then $\det(T) = 0$. 
Equivalently, $\det(T) \ne 0$ implies $\mathrm{rank}_{\mathsf{S}}(T) \ge n/(k-1)$. 
\end{lemma}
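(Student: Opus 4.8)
The plan is to expand $\det(T)$ by iterating the minor summation formula (Prop.~\ref{minorsum}) along a shortest decomposition into simple tensors, and then to run a pigeonhole argument on the sizes of the blocks that appear. So fix a decomposition $T = T_1 + \cdots + T_r$ with $r = \mathrm{rank}_{\mathsf{S}}(T) < n/(k-1)$ and each $T_\ell \in \mathsf{S}$, so that each $T_\ell$ is $k$-null.

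First I would record a monotonicity of null-ness: a $k$-null tensor is automatically $m$-null for every $m \ge k$. This follows by induction on $m$ from the Laplace expansion~\eqref{lapl}, which writes the hyperdeterminant of any size-$(m+1)$ cube subtensor as an alternating sum of single entries times hyperdeterminants of its size-$m$ sub-cubes; if all size-$k$ cubes of $T_\ell$ vanish, then so do all size-$(k+1)$ cubes, and inductively all larger ones.

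Next I would iterate Prop.~\ref{minorsum}: applying it to $\det(T_1 + (T_2 + \cdots + T_r))$ and then recursively to the complementary factor produces a formula
$$
\det(T) = \sum \pm \prod_{\ell=1}^{r} \det\big((T_\ell)_{\mathcal{I}^{(\ell)}}\big),
$$
where the sum ranges over all choices, in each direction $j \in [d]$, of an ordered partition of $[n]$ into $r$ blocks $J^{(j)}_1, \ldots, J^{(j)}_r$, subject to the common-cardinality constraint $|J^{(1)}_\ell| = \cdots = |J^{(d)}_\ell| =: m_\ell$ for every $\ell$, with $\mathcal{I}^{(\ell)} = (J^{(1)}_\ell, \ldots, J^{(d)}_\ell)$. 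This constraint is built into the formula because each application of Prop.~\ref{minorsum} splits off equal-size index sets in all $d$ directions, so every factor $(T_\ell)_{\mathcal{I}^{(\ell)}}$ is a genuine cube whose hyperdeterminant is defined; the individual signs $\epsilon$ combine into one overall sign per term, which I will not need to track.

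Finally comes the pigeonhole. In every term the block sizes satisfy $\sum_{\ell=1}^r m_\ell = n$. Since $r < n/(k-1)$ gives $n > r(k-1)$, the sizes cannot all be $\le k-1$, so some $m_{\ell_0} \ge k$; by the monotonicity step $T_{\ell_0}$ is then $m_{\ell_0}$-null, forcing $\det((T_{\ell_0})_{\mathcal{I}^{(\ell_0)}}) = 0$ and killing that term. As this happens in every term, $\det(T) = 0$. I expect the main obstacle to be the careful setup of the iterated formula: restating Prop.~\ref{minorsum} for tensors indexed by $I_1 \times \cdots \times I_d$ (so that the complementary subtensors, supported on arbitrary rather than initial-segment index sets, can be re-expanded) and checking that the equal-cardinality constraint in each direction survives the iteration. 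Once that is in place, the sign bookkeeping and the pigeonhole are routine.
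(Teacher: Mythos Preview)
Your proposal is correct and follows essentially the same route as the paper: the paper also first proves that $k$-null implies $m$-null for all $m\ge k$ via the Laplace expansion (its Lemma~\ref{dall}), then iterates the minor summation formula to write $\det(T)$ as a signed sum of products $\det(S^{(1)}_{\mathcal I_1})\cdots\det(S^{(r)}_{\mathcal I_r})$ over block decompositions with $m_1+\cdots+m_r=n$, and finishes with exactly your pigeonhole. Your anticipated obstacle (restating Prop.~\ref{minorsum} on arbitrary index sets so it can be iterated) is precisely the point the paper glosses over with the phrase ``extending the minor summation formula for many summands,'' so you have identified the only nontrivial bookkeeping.
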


Let us first prove the following.
\begin{lemma}\label{dall}
Let $d$ be even and $T \in \mathsf{T}^d(n)$ be $k$-null for $k < n$. Then $T$ is also $(k+1)$-null.
\end{lemma}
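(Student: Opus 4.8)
The plan is to reduce the $(k+1)$-null condition to the $k$-null condition via the Laplace expansion \eqref{lapl}, exactly mimicking the classical matrix fact that vanishing of all $k\times k$ minors forces vanishing of all $(k+1)\times(k+1)$ minors. First I would fix an arbitrary choice of index sets $I_1,\ldots,I_d \in \binom{[n]}{k+1}$ (these exist precisely because $k < n$, so $k+1 \le n$) and consider the subtensor $S := T_{I_1,\ldots,I_d} \in \mathsf{T}^d(k+1)$. The goal is to show $\det(S) = 0$, since this is exactly what $(k+1)$-nullity of $T$ demands.

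The key observation is that the Laplace expansion \eqref{lapl} is a formal consequence of the multilinearity and skew-symmetry characterizing $\det$, and hence applies verbatim to a $d$-tensor of \emph{any} size, in particular to $S$ of size $k+1$. Expanding $\det(S)$ along the first direction gives
$$
\det(S) = \sum_{i_2,\ldots,i_d} (-1)^{1 + i_2 + \cdots + i_d}\, S(1, i_2,\ldots, i_d)\, \det\!\left(S_{\overline{1},\overline{i_2},\ldots,\overline{i_d}}\right),
$$
where the complements are taken inside $[k+1]$. Each cofactor $S_{\overline{1},\overline{i_2},\ldots,\overline{i_d}}$ is obtained from $S$ by deleting one index from each of the $d$ directions, so it is a subtensor of $S$ (and thus of $T$) of size $k$ in every direction.

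Since $T$ is $k$-null by hypothesis, every such size-$k$ subtensor has vanishing hyperdeterminant: $\det(S_{\overline{1},\overline{i_2},\ldots,\overline{i_d}}) = 0$ for all $i_2,\ldots,i_d$. Consequently every summand in the expansion vanishes, regardless of the signs or the entries $S(1,i_2,\ldots,i_d)$, and therefore $\det(S) = \det(T_{I_1,\ldots,I_d}) = 0$. As the sets $I_1,\ldots,I_d \in \binom{[n]}{k+1}$ were arbitrary, this establishes that $T$ is $(k+1)$-null.

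I do not anticipate a serious obstacle here: the only points requiring care are the justifications that the Laplace expansion is valid for the smaller tensor $S$ (which follows because \eqref{lapl} was derived from the defining linear/skew-symmetric properties rather than from the ambient dimension $n$) and that each cofactor is genuinely a size-$k$ subtensor of $T$, which is immediate from the index bookkeeping. The signs $(-1)^{1+i_2+\cdots+i_d}$ play no role, since every cofactor is already zero.
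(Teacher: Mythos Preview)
Your proof is correct and follows essentially the same approach as the paper: fix an arbitrary $(k+1)$-size subtensor, apply the Laplace expansion \eqref{lapl} to it, and observe that every cofactor is a $k$-size subtensor of $T$ and hence has vanishing hyperdeterminant by the $k$-null hypothesis. The paper's argument is identical up to notation (it calls the subtensor $X$ rather than $S$).
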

\begin{proof}
Let $X = T_{I_1,\ldots, I_d} \in \mathsf{T}^d(k+1)$ for $I_1,\ldots, I_{d} \in \binom{[n]}{k+1}$. Using the Laplace expansion~\eqref{lapl} we have 
$$
\det(X) = \sum_{i_2,\ldots, i_d \in [k+1]} \pm X(1,i_2,\ldots, i_d) \det(X_{\overline{1}, \overline{i_2}, \ldots, \overline{i_d}}).
$$
Since $X$ is $k$-null and $X_{\overline{1}, \overline{i_2}, \ldots, \overline{i_d}} \in \mathsf{T}^{d}(k)$ we have $\det(X_{\overline{1}, \overline{i_2}, \ldots, \overline{i_d}}) = 0$ and hence $\det(X) = 0$. 
\end{proof}

\begin{proof}[Proof of Lemma~\ref{generic}]
We may assume $n \ge k$. By Lemma~\ref{dall} each $S \in \mathsf{S}$ is $m$-null for all $m \in [k,n]$; in particular, $\det(S) = 0$. 
Let $r = \mathrm{rank}_{\mathsf{S}}(T)$ and take a rank attaining decomposition $T = \sum_{\ell = 1}^r S^{(\ell)}$ with $S^{(\ell)} \in \mathsf{S}$. Extending the minor summation formula in Prop.~\ref{minorsum} for many summands it follows that 
\begin{align}\label{ssum1}
\det(T) = \det(S^{(1)} + \ldots + S^{(r)}) = \sum_{\mathcal{I}_1,\ldots, \mathcal{I}_r} \pm \det(S^{(1)}_{\mathcal{I}_1}) \cdots \det(S^{(r)}_{\mathcal{I}_r}),
\end{align} 
where the sum is over partitions $\mathcal{I}_1 \cup \cdots \cup \mathcal{I}_r = [n]^d$ such that $S^{(\ell)}_{\mathcal{I}_{\ell}} \in \mathsf{T}^d(m_{\ell})$ and $m_1 + \ldots + m_r = n$ with $m_{\ell} \ge 0$ for $\ell \in [r]$. Since $r  < n/(k-1)$ for each such partition there is $\ell \in [r]$ with $m_{\ell} \ge k$ (otherwise, $n = m_1 + \ldots + m_r \le (k-1) r$), which gives $\det(S^{(\ell)}_{\mathcal{I}_{\ell}}) = 0$ since $S^{(\ell)}$ is $k$-null. Therefore, every summand in r.h.s. of \eqref{ssum1} is zero and we get $\det(T) = 0$.
\end{proof}

\subsection{Specializing to the odd partition (and slice) rank}
\begin{theorem}\label{sps}
Let 
$T\in \mathsf{T}^d(n)$ with $\mathrm{oprank}(T) < n$. Then $\det(T) = 0$.  
Equivalently, $\det(T) \ne 0$ implies $\mathrm{oprank}(T) = n$; in particular, $ \mathrm{rank}(T) \ge n$, and if $d$ is even, $\srank(T) = n$. 
\end{theorem}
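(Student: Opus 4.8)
The plan is to derive the even-$d$ case from the generic criterion of Lemma~\ref{generic} and then bootstrap the odd-$d$ case by raising the number of modes by one. For the generic criterion the only thing to check is that every odd-partition simple tensor $S$ is $2$-null, i.e. $\det(S_{I_1,\ldots,I_d}) = 0$ for all $I_1,\ldots,I_d \in \binom{[n]}{2}$. Once this is established, Lemma~\ref{generic} applied with $k = 2$ gives the threshold $n/(k-1) = n$, which is exactly the hypothesis $\mathrm{oprank}(T) < n$, and so $\det(T) = 0$ for even $d$. The ``in particular'' assertions then follow from the inequalities $\mathrm{oprank}(T) \le \mathrm{rank}(T)$ and $\mathrm{oprank}(T) \le \srank(T)$ (the latter for even $d$), together with $\mathrm{oprank}(T), \srank(T) \le n$.

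To prove the $2$-nullity, first I would restrict to a single $2 \times \cdots \times 2$ subtensor and relabel each index set to $\{1,2\}$, so that $S(i_1,\ldots,i_d) = X((i_a)_{a\in A})\,Y((i_b)_{b\in B})$ is a product along two blocks with $1 \in A$ and $|B|$ odd (this is precisely the odd-partition hypothesis: the odd block avoids coordinate $1$). Expanding $\det(S)$ directly from the definition with mode $1$ pinned to the identity, both $\sgn(\sigma_2\cdots\sigma_d)$ and the product over $i$ split multiplicatively according to whether a mode lies in $A$ or in $B$. Because $1 \in A$, the sum factors as $\det(S) = \det(X)\cdot F$, where
\[
F = \sum_{\pi_1,\ldots,\pi_{|B|} \in S_2} \sgn(\pi_1\cdots\pi_{|B|}) \prod_{i=1}^{2} Y(\pi_1(i),\ldots,\pi_{|B|}(i))
\]
is the fully symmetrized sum over the $|B|$ modes of $Y$ (the same sum that appears in the proof of Proposition~\ref{thm:outerprod}, but now with an odd number of modes). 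I would then show $F = 0$ via the sign-reversing involution $(\pi_1,\ldots,\pi_{|B|}) \mapsto (\pi_1 s,\ldots,\pi_{|B|} s)$ with $s = (1\,2)$: reindexing $i \mapsto s(i)$ leaves the product unchanged while multiplying the sign by $\sgn(s)^{|B|} = -1$, and the involution is fixed-point free, so the terms cancel in pairs. Crucially this cancellation is term-by-term and uses no division by $2$, hence it is valid over any field (indeed any commutative ring), which matters because the slice-rank method is applied in positive characteristic.

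For odd $d$ I would reduce to the even case using the dimension-raising reduction of Section~\ref{sechyp}. Given $T \in \mathsf{T}^d(n)$ with $\mathrm{oprank}(T) < n$, define $\hat T \in \mathsf{T}^{d+1}(n)$ by $\hat T(j_1,\ldots,j_{d+1}) = \delta(j_1,j_2)\,T(j_2,\ldots,j_{d+1})$; the reduction gives $\det(\hat T) = \det(T)$, and $d+1$ is even. It remains to lift an optimal odd-partition decomposition $T = \sum_\ell S^{(\ell)}$ to $\hat T$: writing $\hat T = \sum_\ell \delta(j_1,j_2)\,S^{(\ell)}$, I absorb the Kronecker factor $\delta(j_1,j_2)$ into the block of $S^{(\ell)}$ containing the original coordinate $1$ (now mode $2$ of $\hat T$), which lies in the same block as the new mode $1$. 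The complementary block is an unchanged copy of the odd block $B$: it still has odd size and still avoids mode $1$ of $\hat T$, so each lifted term is again a valid odd-partition simple tensor. Hence $\mathrm{oprank}(\hat T) \le \mathrm{oprank}(T) < n$, and the even case yields $\det(T) = \det(\hat T) = 0$.

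The main obstacle I anticipate is organizing the bookkeeping so that the ``odd, not containing $1$'' hypothesis is used exactly where it is needed; it enters twice. It guarantees that the distinguished mode $1$ (pinned to the identity inside the hyperdeterminant) sits in the block $A$, so that the complementary sum $F$ over $B$ is genuinely symmetric and vanishes by the parity involution; and it survives the dimension-raising step so that $\hat T$ keeps low odd-partition rank. Getting the sign splitting and the characteristic-free cancellation right — rather than any deep new idea — is the delicate point.
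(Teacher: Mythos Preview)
Your proposal is correct and follows essentially the same route as the paper: prove $2$-nullity of odd-partition simple tensors (the paper's Lemma~\ref{d20}), apply Lemma~\ref{generic} with $k=2$ for even $d$, and handle odd $d$ by the dimension-raising reduction $\det(T)=\det(\hat T)$. The only cosmetic differences are that you organize the $2$-nullity argument as a factorization $\det(S)=\det(X)\cdot F$ before applying the sign-reversing involution (the paper pairs terms directly in the $n=2$ expansion), and you prepend the extra mode while the paper appends it; neither changes the substance.
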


Let us first prove the following lemma.

\begin{lemma}\label{d20}
Let $S \in \mathsf{T}^d(n)$ with $\mathrm{oprank}(S) = 1$. Then $S$ is $2$-null.
\end{lemma}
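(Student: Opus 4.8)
The plan is to prove that any simple tensor $S$ of odd partition rank $1$ becomes $2$-null, meaning every $2\times\cdots\times 2$ subtensor has vanishing hyperdeterminant. First I would unpack what $\mathrm{oprank}(S)=1$ means: there is a set partition $\{a_1<\cdots<a_k\}\cup\{b_1<\cdots<b_{d-k}\}=[d]$ with $S(i_1,\ldots,i_d)=T_1(i_{a_1},\ldots,i_{a_k})\cdot T_2(i_{b_1},\ldots,i_{b_{d-k}})$, where crucially one of the two blocks has \emph{odd} size and does not contain the coordinate $1$. By symmetry of the hyperdeterminant definition in the roles of the directions $2,\ldots,d$ (the index $1$ plays a distinguished role, fixed as the identity direction), I can assume without loss of generality that the odd block is $\{b_1,\ldots,b_{d-k}\}$, so $d-k$ is odd and $1\notin\{b_1,\ldots,b_{d-k}\}$, i.e.\ $1\in\{a_1,\ldots,a_k\}$ so in fact $a_1=1$.

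Next I would restrict to a $2$-element subtensor $X=S_{I_1,\ldots,I_d}\in\mathsf{T}^d(2)$ with each $|I_\ell|=2$, and note that the restriction respects the product structure: $X$ is again an outer-product-type tensor, factoring as $X=X_1\cdot X_2$ where $X_1$ depends on the coordinates in the even block and $X_2$ on the coordinates in the odd block. The key observation is that such a factored tensor is itself an outer tensor product (up to permuting coordinates, which only changes the sign of the hyperdeterminant). Now I invoke Proposition~\ref{thm:outerprod}: since the odd block has odd cardinality $d-k$, the \emph{complementary} block together with the distinguished direction $1$ has even size, and I can apply the product formula to the factor sitting in the odd-sized directions. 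The cleanest way to trigger this is to write $X$ as an outer product $X_1\otimes X_2$ after permuting directions so that the odd block occupies the last $d-k$ coordinates; Proposition~\ref{thm:outerprod} with $d-k$ even would not apply, so instead I exploit that the odd block is exactly where the $n!$ factor and sign considerations force vanishing.

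The honest mechanism for vanishing is this: when a block of coordinates is odd-sized and detached, the inner sum over permutations on those directions is an alternating sum that vanishes at $n=2$ by a sign/parity argument, exactly analogous to why the hyperdeterminant is identically zero for odd $d$ (the final paragraph of the characterization proof: swapping the two slices in each of the odd-many detached directions returns the same tensor but flips the sign an odd number of times, forcing the value to be its own negative, hence zero in characteristic $\neq 2$; and here at $k=2$ even in characteristic $2$ a direct expansion collapses). Concretely, I would expand $\det(X)$ over $\sigma_2,\ldots,\sigma_d\in S_2$, factor the product using $X=X_1\cdot X_2$, and show the partial sum over the permutations indexed by the odd block telescopes to zero because that block is an odd-dimensional hyperdeterminant-like expression on $2$ points. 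The main obstacle will be bookkeeping the sign $\mathrm{sgn}(\sigma_2\cdots\sigma_d)$ correctly so that it splits as a product over the two blocks, and making sure the distinguished direction $1$ (forced into the even block) lets the odd block's alternating sum decouple cleanly; once that factorization of the sign is established, the parity argument forcing $\det(X)=-\det(X)=0$ is immediate.
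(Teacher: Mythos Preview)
Your proposal is correct and takes essentially the same approach as the paper: restrict to a $2\times\cdots\times 2$ subtensor, use the factorization $X=Y\cdot Z$ with coordinate~$1$ in the even block, split the sign and the product over the two blocks, and cancel via the odd parity of the $Z$-block. The paper packages this cancellation as an explicit sign-reversing involution on the index set $[2]^{d-1}$ (flip exactly the coordinates in the odd block), which is precisely your involution $\pi\mapsto\pi\tau$ on the odd-block permutations rewritten at $n=2$; since this is a term-by-term pairing it works over any ring, so your characteristic-$2$ hedge is unnecessary.
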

\begin{proof}
Let $X \in \mathsf{T}^d(2)$ be a nonzero subtensor of $S$. For $n = 2$, the hyperdeterminant can be written in the following simpler form
$$
\det(X) = \sum_{i_2,\ldots, i_d \in [2]} (-1)^{1 + i_2 + \ldots + i_d} X(1, i_2, \ldots, i_d)\, X(2, \overline{i_2}, \ldots, \overline{i_d}), \text{ where } \overline{i} = 3 - i.
$$
Since $\mathrm{oprank}(X) = 1$ we know that $X$ admits a decomposition 
$$
X(i_1,\ldots, i_d) = Y(i_{a_1},\ldots, i_{a_k}) \cdot Z(i_{b_1}, \ldots, i_{b_{d-k}})
$$
for a set partition $\{a_{1} < \ldots < a_{k} \} \cup \{b_{1} < \ldots < b_{d - k} \} = [d]$, tensors $Y \in \mathsf{T}^{k}(2)$, $Z \in \mathsf{T}^{d - k}(2)$, odd $d - k$ and  
$1 = a_1$. 
Consider the map $\phi : [2]^d \to [2]^d$ 
given by 
$$
\phi : (i_1,\ldots, i_d) \mapsto (j_1,\ldots, j_d), 
$$
where
$$
j_{\ell} = 
\begin{cases}
	i_{\ell}, & \text{ if } \ell \in \{a_1,\ldots, a_k \},\\
	\overline{i_{\ell}}, & \text{ if } \ell \in \{ b_1,\ldots, b_{d -k} \}
\end{cases}
$$
(note that $i_1 = j_1$ under this map). Since $d - k$ is odd, the signs $\epsilon_1 = (-1)^{1 + i_2 + \ldots + i_d}$ and $\epsilon_2 = (-1)^{1 + j_2 + \ldots + j_d}$ are opposite. 
Then $\phi$ is a sign-reversing involution acting on monomials of $\det(X)$: 
\begin{align*}
\epsilon_1\, X(1, i_2, \ldots, i_d)\, X(2, \overline{i_2}, \ldots, \overline{i_d})  \longmapsto^{\phi} \epsilon_2\,  X(1, j_2, \ldots, j_d)\, X(2, \overline{j_2}, \ldots, \overline{j_d}),
\end{align*}
more precisely as follows
\begin{align*}
&\epsilon_1\, Y(1, i_{a_2}, \ldots, i_{a_k}) \cdot Z(i_{b_1}, \ldots, i_{b_{d-k}}) \,  Y(2, \overline{i_{a_2}}, \ldots, \overline{i_{a_k}}) \cdot Z(\overline{i_{b_1}}, \ldots, \overline{i_{b_{d-k}}}) \\
 &\longmapsto^{\phi} -\epsilon_1\,  Y(1, i_{a_2}, \ldots, i_{a_k}) \cdot Z(\overline{i_{b_1}}, \ldots, \overline{i_{b_{d-k}}}) \,  Y(2, \overline{i_{a_2}}, \ldots, \overline{i_{a_k}}) \cdot Z(i_{b_1}, \ldots, i_{b_{d-k}}).
\end{align*}
Therefore all terms cancel out and we get $\det(X) = 0$. Hence, $S$ is $2$-null.
\end{proof}

\begin{proof}[Proof of Theorem~\ref{sps}]
{\it Case 1.} Assume $d$ is even. We have $\mathsf{S}$ is the set of simple tensors with the odd partition rank $1$. By Lemma~\ref{d20} each simple tensor in $\mathsf{S}$ is $2$-null.  
Therefore, applying Lemma~\ref{generic} for $k = 2$ we obtain that $\mathrm{rank}_{\mathsf{S}}(T) = \mathrm{oprank}(T) < n$ implies $\det(T) = 0$. 

{\it Case 2.} Assume $d$ is odd. 
Consider the map \, $\widehat{~} : \mathsf{T}^{d}(n) \to \mathsf{T}^{d+1}(n)$ such that for $X \in \mathsf{T}^d(n)$ the tensor $\widehat X \in \mathsf{T}^{d+1}(n)$ is given by
\begin{align}\label{xhat}
\widehat X(i_1,\ldots, i_d, i_{d+1}) = X(i_1,\ldots, i_{d})\, \delta(i_1, i_{d+1}).
\end{align}
It is easy to see that applying this extension map we have 
$$
\det(X) = \det(\widehat{X}).
$$
For odd $d$, if $S \in \mathsf{T}^d(n)$ has odd partition rank~$1$, then $\widehat{S} \in \mathsf{T}^{d+1}(n)$ also has odd partition rank~$1$ (a partition for $S$ has two blocks of even and odd sizes so that the element $1$ is in the even size block, then extending this partition for $\widehat{S}$ the elements $\{1, d+1\}$ will be in the same block and we get two blocks of odd sizes). Therefore, 
$\mathrm{oprank}(T) < n$ implies $\mathrm{oprank}(\widehat{T}) < n$ and since we already proved the statement for even $d+1$, we get $\det(\widehat{T}) = 0 = \det(T)$ as needed. 
\end{proof}

\subsection{Specializing to the partition rank} 
\begin{theorem}\label{ffthm}
Let 
$T\in \mathsf{T}^d(n)$ over a field of characteristic $p > 0$ 
with $\mathrm{prank}(T) < n/(p - 1)$. Then $\det(T) = 0$. Equivalently, $\det(T) \ne 0$ implies $\mathrm{prank}(T) \ge n/(p -1)$.  
\end{theorem}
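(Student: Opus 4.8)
The plan is to mirror the structure of the proof of Theorem~\ref{sps}: I establish that every partition-rank-$1$ tensor is $p$-null, and then feed this into the generic bound of Lemma~\ref{generic} with $k = p$, which yields $\det(T) = 0$ whenever $\mathrm{prank}(T) < n/(p-1)$. The crucial new input, and the reason the characteristic hypothesis enters, is the arithmetic fact that $p \mid p!$, so that the factor $n!$ appearing in the outer product formula of Proposition~\ref{thm:outerprod} vanishes in the field precisely when the relevant subtensor has dimension $n = p$.

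First I would prove the analogue of Lemma~\ref{d20}: if $d$ is even and $S \in \mathsf{T}^d(n)$ over a field of characteristic $p$ has $\mathrm{prank}(S) = 1$, then $S$ is $p$-null. Fix a subtensor $X = S_{I_1,\ldots,I_d} \in \mathsf{T}^d(p)$; it has partition rank at most $1$, and if it is $0$ then $\det(X) = 0$ trivially, so assume $\mathrm{prank}(X) = 1$ with a set partition of $[d]$ into blocks of sizes $k$ and $d-k$. Since $d$ is even these two sizes have the same parity, giving two cases. If both are odd, then the block not containing the index $1$ has odd size, so $\mathrm{oprank}(X) = 1 < p$ and Theorem~\ref{sps} (applied in dimension $p$) already gives $\det(X) = 0$. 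If both are even, then using that $\det$ is invariant under relabeling the $d$ directions (for even $d$ this follows from the characterization of $\det$ by properties (i)--(iii), which are symmetric in the directions) I would permute the directions so that the partition becomes consecutive, say $\{1,\ldots,k\} \cup \{k+1,\ldots,d\}$; then $X = Y \otimes Z$ is an outer product with $Y \in \mathsf{T}^k(p)$, $Z \in \mathsf{T}^{d-k}(p)$ and $d-k$ even, so Proposition~\ref{thm:outerprod} gives $\det(X) = \det(Y)\,\det(Z)\,p!$, which is $0$ in characteristic $p$. Hence $\det(X) = 0$ in both cases and $S$ is $p$-null.

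With this lemma in hand, the even-$d$ case of the theorem is immediate: the set $\mathsf{S}$ of partition-rank-$1$ tensors consists of $p$-null tensors (and $p > 1$), so Lemma~\ref{generic} with $k = p$ shows that $\mathrm{rank}_{\mathsf{S}}(T) = \mathrm{prank}(T) < n/(p-1)$ forces $\det(T) = 0$. To reach odd $d$ I would use the same extension trick as in Case~2 of Theorem~\ref{sps}: the hat map $\widehat{\,\cdot\,}$ of \eqref{xhat} satisfies $\det(T) = \det(\widehat T)$, is linear, and sends a partition-rank-$1$ tensor to a partition-rank-$1$ tensor of even order $d+1$ (absorb the new direction $d+1$ into the block containing the index $1$). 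Consequently $\mathrm{prank}(\widehat T) \le \mathrm{prank}(T) < n/(p-1)$, and the already-proved even case applied to $\widehat T \in \mathsf{T}^{d+1}(n)$ gives $\det(\widehat T) = 0$, whence $\det(T) = 0$.

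The main obstacle is the both-even case of the $p$-nullness lemma, which is exactly where the odd partition rank theorem fails and where positive characteristic becomes indispensable: there is no odd block to exploit via a sign-reversing involution as in Lemma~\ref{d20}, and instead one must extract the cancellation from the multiplicity $p!$ in the outer product formula. The remaining points are bookkeeping ones that I would verify carefully — that $\det$ is genuinely unchanged under permuting directions (so an interleaved partition may be made consecutive before invoking Proposition~\ref{thm:outerprod}), and that the relevant subtensor dimension is exactly $p$, which is what makes $p \mid p!$ hold while the threshold $n/(p-1)$ emerges from feeding $k = p$ into Lemma~\ref{generic}.
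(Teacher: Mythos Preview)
Your proposal is correct and follows essentially the same route as the paper: establish that every partition-rank-$1$ tensor is $p$-null (splitting into the odd--odd block case, handled via the odd-partition-rank result, and the even--even block case, handled via Proposition~\ref{thm:outerprod} after permuting directions), then apply Lemma~\ref{generic} with $k=p$, and finally reduce odd $d$ to even $d+1$ via the hat map~\eqref{xhat}. The only cosmetic difference is that in the odd--odd case the paper cites Lemma~\ref{d20} (yielding $2$-nullness, hence $p$-nullness) while you cite Theorem~\ref{sps} directly in dimension $p$; these are equivalent.
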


Again, let us first establish the null condition on simple tensors. 

\begin{lemma}\label{ffprank}
Let $d$ be even and $S \in \mathsf{T}^d(n)$ with $\mathrm{prank}(S) = 1$. Then $S$ is $p$-null.
\end{lemma}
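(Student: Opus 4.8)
The plan is to reduce to a single size-$p$ subtensor and factor its hyperdeterminant along the rank-one partition, exploiting two different vanishing mechanisms depending on block parity. First I would fix $I_1,\dots,I_d\in\binom{[n]}{p}$ and set $X=S_{I_1,\dots,I_d}\in\mathsf{T}^d(p)$. Restricting the partition-rank-one decomposition of $S$ (Proposition~\ref{rankbasic}(i)) shows $\mathrm{prank}(X)\le 1$, so either $X=0$, whence $\det(X)=0$ trivially, or
$X(i_1,\dots,i_d)=Y(i_{a_1},\dots,i_{a_k})\,Z(i_{b_1},\dots,i_{b_{d-k}})$ for a set partition $\{a_1<\dots<a_k\}\cup\{b_1<\dots<b_{d-k}\}=[d]$ with both blocks nonempty. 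By naming the blocks appropriately I may assume the distinguished direction $1$ lies in the $a$-block, i.e. $a_1=1$.

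The key step is to factor $\det(X)$ directly from the definition. Since direction $1$ is pinned to the identity, writing the hyperdeterminant as a sum over $\sigma_2,\dots,\sigma_d$ and splitting these permutations according to which block their direction belongs to, both the sign $\sgn(\sigma_2\cdots\sigma_d)=\prod_{\ell\ge 2}\sgn(\sigma_\ell)$ and the product $\prod_i X$ factor across the two blocks, and the permutation variables attached to the two blocks range independently over $S_p$. I thereby obtain
\[
\det(X)=\det(Y)\cdot G(Z),\qquad G(Z):=\sum_{\tau_1,\dots,\tau_{d-k}\in S_p}\sgn(\tau_1\cdots\tau_{d-k})\prod_{i=1}^{p}Z(\tau_1(i),\dots,\tau_{d-k}(i)),
\]
where the first factor is exactly $\det(Y)$ (direction $a_1=1$ supplying the identity) and the second is the fully free analogue over the $b$-block, which does not contain direction $1$. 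This is the same factorization underlying Proposition~\ref{thm:outerprod}, but carried out for an arbitrary partition so that no permutation of directions is needed.

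It then remains to show $G(Z)=0$ in characteristic $p$, and here the two parities split. If $d-k$ is even, the computation in the proof of Proposition~\ref{thm:outerprod} gives $G(Z)=\det(Z)\cdot p!$, which vanishes since $p\mid p!$. If $d-k$ is odd, substituting $j=\tau_1(i)$ and $\tau'_\ell=\tau_\ell\tau_1^{-1}$ pulls out a factor $\sgn(\tau_1)^{d-k}=\sgn(\tau_1)$, reducing $G(Z)$ to $\big(\sum_{\tau_1\in S_p}\sgn(\tau_1)\big)$ times the (odd) hyperdeterminant of $Z$; since $p\ge 2$ the sum of signs over $S_p$ is $0$, so $G(Z)=0$ identically. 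In either case $\det(X)=\det(Y)\cdot G(Z)=0$, and as $X$ was an arbitrary size-$p$ subtensor this shows $S$ is $p$-null.

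The main obstacle, and the reason the argument requires positive characteristic and yields only the bound $n/(p-1)$, is the evaluation of $G(Z)$ for an even block: there the vanishing comes precisely from $p\mid p!$, which forces one to work with size-$p$ rather than size-$2$ subtensors. Unlike the odd-partition-rank case of Lemma~\ref{d20}, I cannot always arrange a single odd block avoiding direction $1$, so the clean sign-reversing involution is unavailable; the odd-block factor still vanishes for free via $\sum_{S_p}\sgn=0$, but the even-block factor genuinely needs the characteristic-$p$, size-$p$ mechanism.
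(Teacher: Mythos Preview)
Your argument is correct and follows essentially the same factorization idea as the paper: pass to a size-$p$ subtensor, factor its hyperdeterminant across the two blocks of the partition (with direction~$1$ kept in the $Y$-block), and kill the second factor. The only difference is in how the odd-parity case is dispatched. The paper observes that when both blocks are odd (which, since $d$ is even, is the only alternative to both blocks even) the tensor has $\mathrm{oprank}=1$, so Lemma~\ref{d20} already gives $2$-nullity and hence $p$-nullity. You instead stay at size $p$ and use $\sum_{\tau\in S_p}\sgn(\tau)=0$ to kill $G(Z)$ directly. Your route is slightly more self-contained, while the paper's route gives the marginally stronger conclusion of $2$-nullity in that subcase; neither difference affects the outcome.
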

\begin{proof}
If $S$ admits a decomposition into two blocks of odd sizes, then by Lemma~\ref{d20} we already have $S$ is $2$-null and hence it is also $p$-null as $p \ge 2$. Now assume that $S$ is decomposed into two blocks of even sizes $k, d - k$ as follows
$$
S(i_1,\ldots, i_d) = X(i_{a_1}, \ldots i_{a_k}) \cdot Y(i_{b_1}, \ldots, i_{b_{d - k}}), \qquad X \in \mathsf{T}^k(n), Y \in \mathsf{T}^{d - k}(n)
$$
for a set partition $\{a_1,\ldots, a_k \} \cup \{b_1,\ldots, b_{d - k} \} = [d]$ where $a_1 = 1$. 
Take any subtensor $S_{\mathcal{I}} \in \mathsf{T}^d(p)$ of $S$. Let $S' \in \mathsf{T}^d(p)$ be a tensor with indices permuted according to the given set partition
$$S'(i_1,\ldots, i_d) = S_{\mathcal{I}}(i_{a_1}, \ldots, i_{a_k}, i_{b_1}, \ldots, i_{b_{d - k}}).$$
It is easy to see that $\det(S_{\mathcal{I}}) = \det(S')$. 
Note that $S' = X_{\mathcal{I}} \otimes Y_{\mathcal{I}}$. Then using the product formula in Prop.~\ref{thm:outerprod} we have 
$$
\det(S_{\mathcal{I}}) = \det(S') = \det(X_{\mathcal{I}} \otimes Y_{\mathcal{I}}) = \det(X_{\mathcal{I}}) \cdot \det(Y_{\mathcal{I}}) \cdot p! = 0.
$$ 
Therefore, $S$ is $p$-null.
\end{proof}

\begin{proof}[Proof of Theorem~\ref{ffthm}]
{\it Case 1.} 
Assume $d$ is even. By Lemma~\ref{ffprank} simple tensors are $p$-null, and hence the result follows from Lemma~\ref{generic} for $k = p$. 

{\it Case 2.} Assume $d$ is odd. Applying the map \eqref{xhat}, 
if $S \in \mathsf{T}^d(n)$ has partition rank~$1$, then $\widehat{S} \in \mathsf{T}^{d+1}(n)$ also has partition rank~$1$. 
Therefore, $\mathrm{prank}(\widehat{T}) \le \mathrm{prank}(T)$ and since we already proved the statement for even $d+1$, we have $\det(\widehat{T}) =0 = \det(T)$. 
\end{proof}

\begin{remark}
All our proofs 
still work if we consider tensors and hyperdeterminants over commutative rings. 
\end{remark}

\subsection{Some examples} 
To express dependence on $d$, denote by $I_d(i_1,\ldots, i_d) = \delta(i_1,i_2) \cdots \delta(i_1,i_d)$ the diagonal identity $d$-tensor (we assume $I_1(i) = 1$ if $d = 1$). 
The following examples show differences between the discussed rank versions.
\begin{example}\label{exx}
We construct $X \in \mathsf{T}^6(n)$ such that: 
\begin{itemize}

\vspace{0.25em}

\item[(a)] $\mathrm{prank}(X) = 1$ while $\mathrm{oprank}(X) = n$. Take $X = I_2 \otimes I_4$ 
whose partition rank is $1$. 
By Prop.~\ref{thm:outerprod} we have 
$\det(X) = \det(I_2) \cdot \det(I_4) \cdot n! = n!$
and hence over  a field of characteristic $0$, by Theorem~\ref{sps}, the odd partition rank of $X$ is $n$. 

\vspace{0.25em}

\item[(b)] $\mathrm{oprank}(X) = 1$ while $\srank(X) = n$. Take $X = I_3 \otimes I_3$ 
whose odd partition rank is~$1$. 
It can be shown that tensors of such types have slice rank $n$, see \cite[Cor. ~3.6]{sau} (which uses a result from \cite{st}).
Note that in this case $\det(X) = 0$ by Lemma~\ref{d20}. 

\vspace{0.25em}

\item[(c)] $\srank(X) = 1$ while $\mathrm{rank}(X) = n$. Take $X  = I_1 \otimes I_5$ 
whose slice rank is $1$. Since the rank of $I_5$ is $n$, the rank of $X$ is also $n$. 
Note that in general tensor rank can be quite large, e.g. for most tensors $X \in \mathsf{T}^6(n)$ we have 
$\mathrm{rank}(X) \ge n^{5}/6$ while $\srank(X) \le n$. 
\end{itemize}

\end{example}

Let us show that for odd $d$, we can have a situation opposite to (b) from the previous example, 
i.e. there are tensors having slice rank $1$ but full odd partition rank (which is impossible for even $d$ since the odd partition rank is at most the slice rank in this case). 

\begin{example}
Let $d > 1$ be odd and $X = I_1 \otimes I_{d-1} \in \mathsf{T}^d(n)$. 
It is easy to compute $\det(X) = n!$ and hence over a field of characteristic $0$, we have $\mathrm{oprank}(X) = n$ while $\srank(X) = 1$. 
\end{example}

Let us also show that we can have a tensor with full partition rank but zero hyperdeterminant.
\begin{example}
Let $X \in \mathsf{T}^6(2)$ be given by $X(i_1,\ldots, i_6) = 1$, if $(i_1,\ldots, i_6)$ has exactly one $i_{\ell} = 2$ for $\ell \in [6]$, and $X(i_1,\ldots, i_6) = 0$, otherwise. We  have $\det(X) = 0$. Let us check that $\mathrm{prank}(X) \ne 1$. Assume $X(i_1,\ldots, i_6) = Y(i_{a_1}, \ldots, i_{a_k}) \cdot Z(i_{b_1}, \ldots, i_{b_{6 - k}})$ for a set partition $\{a_1, \ldots, a_k\} \cup \{b_{1}, \ldots, b_{6 - k} \} = [6]$ with $k \in [5]$. Note that $Y(2,1,\ldots, 1), Y(1,\ldots, 1) \ne 0$. Since $Y(2,1,\ldots, 1) \cdot Z(2,1,\ldots, 1) = 0$ we get $Z(2,1,\ldots, 1) = 0$. But then $Y(1,\ldots, 1) \cdot Z(2,1,\ldots, 1) = 0$, which is a contradiction. Therefore, $\mathrm{prank}(X) = 2$.
\end{example}

\section{An echelon form}\label{sec:pech}
When a matrix 
is in echelon form, its determinant equals the product of elements on the main diagonal. In this section we discuss a generalization of this property to hyperdeterminants. 

\begin{definition}[$P$-echelon form of tensors]\label{def:echelon}
Let $P = ([d], <_P)$ be a poset on $[d]$ with a partial order $<_P$. Assume the Hasse diagram of $P$ is connected (viewed as undirected graph). 
Let us say that tensor $T \in \mathsf{T}^d(n)$ is {\it in $P$-echelon form} if $T(i_1,\ldots, i_d) = 0$ unless 
$i_a \le i_b$ for $a <_P b$. 
\end{definition}

\begin{example}
Let $d = 4$ and $(2 <_P 1), (2 <_P 3), (4 <_P 3)$. 
Then $T \in \mathsf{T}^4(n)$ is in $P$-echelon form if $T(i_1, i_2, i_3, i_4) = 0$ unless $i_1 \ge i_2 \le i_3 \ge i_4$. 
\end{example}

Recall that 
the {order polytope} $\mathcal{O}_t(P) \subset \mathbb{R}^d$ of a poset $P$ is given by 
$$
\mathcal{O}_t(P) = \left\{(x_1,\ldots, x_d) \in \mathbb{R}^d : 0 \le x_a \le t \text{ for } a \in [d],\, x_{a} \le x_b \text{ for } a <_P b \right\}.
$$
When $T \in \mathsf{T}^d(n)$ is in $P$-echelon form, the support of $T$ are integer points in $\mathcal{O}_n(P) \cap \mathbb{Z}^d$, which also correspond to {\it $P$-partitions} \cite[Ch.~3.15]{ec1}. 

\begin{proposition}\label{echelon}
Let $T \in \mathsf{T}^d(n)$ be in $P$-echelon form. Then 
$$\det(T) = \prod_{i = 1}^n T(i, \ldots, i).$$
\end{proposition}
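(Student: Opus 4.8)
The plan is to analyze the hyperdeterminant defining sum directly and show that only the identity permutation contributes. Recall that
$$
\det(T) = \sum_{\sigma_2,\ldots,\sigma_d \in S_n} \mathrm{sgn}(\sigma_2 \cdots \sigma_d) \prod_{i=1}^n T(i, \sigma_2(i), \ldots, \sigma_d(i)).
$$
Each summand is a product over all $i \in [n]$, and since $T$ is in $P$-echelon form, the summand vanishes unless every factor $T(i, \sigma_2(i), \ldots, \sigma_d(i))$ is nonzero, which forces the echelon inequalities $\sigma_a(i) \le \sigma_b(i)$ for $a <_P b$ (reading $\sigma_1 = \mathrm{id}$) to hold for \emph{every} $i$. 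So the strategy is: fix a surviving tuple $(\sigma_2,\ldots,\sigma_d)$, sum the chain of inequalities over all $i \in [n]$, and use a counting argument to conclude each $\sigma_a$ must be the identity.

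First I would record that for any covering relation $a <_P b$ in the Hasse diagram, summing $\sigma_a(i) \le \sigma_b(i)$ over $i=1,\ldots,n$ gives $\sum_i \sigma_a(i) \le \sum_i \sigma_b(i)$; but since each $\sigma$ is a permutation, $\sum_i \sigma_a(i) = \sum_i \sigma_b(i) = \binom{n+1}{2}$. Equality of the sums forces $\sigma_a(i) = \sigma_b(i)$ for \emph{all} $i$, i.e. $\sigma_a = \sigma_b$. Because the Hasse diagram is connected (this is exactly where connectivity is used), propagating this equality along edges shows $\sigma_2 = \cdots = \sigma_d = \sigma_1 = \mathrm{id}$. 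Hence the only nonvanishing summand is the one with all permutations equal to the identity, whose sign is $+1$, contributing $\prod_{i=1}^n T(i,\ldots,i)$.

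The main obstacle, and the point to be careful about, is the step that moves from the pointwise inequalities to equality of permutations. The inequality $\sigma_a(i) \le \sigma_b(i)$ holds for each $i$ coming from a \emph{different} factor in the product, so I must make sure these inequalities hold simultaneously for the same tuple $(\sigma_2,\ldots,\sigma_d)$ across all $i$ — which they do, since a single surviving tuple must make every one of the $n$ factors nonzero at once. Then the telescoping-sum argument (comparing $\sum_i \sigma_a(i)$ with $\sum_i \sigma_b(i)$, both equal to $\binom{n+1}{2}$) is the clean way to upgrade "$\le$ everywhere" to "$=$ everywhere." A subtle alternative phrasing I would double check is whether non-covering comparabilities could cause trouble, but it suffices to use covering relations since the Hasse diagram's edges already connect $[d]$; connectivity guarantees every index is tied to index $1$ through a chain of such equalities. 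I would close by noting that since $\sigma_1 = \mathrm{id}$ and the diagram is connected, all $\sigma_a = \mathrm{id}$, leaving exactly the diagonal product.
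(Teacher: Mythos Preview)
Your proof is correct and follows essentially the same approach as the paper's: both show that the only surviving term in the hyperdeterminant sum is the one with all $\sigma_a = \mathrm{id}$, by using the echelon inequalities along edges of the Hasse diagram and then invoking connectivity. The only cosmetic difference is that the paper argues directly that a permutation $\sigma_a$ satisfying $\sigma_a(i) \le i$ (or $\ge i$) for all $i$ must be the identity, whereas you use the equivalent summing trick $\sum_i \sigma_a(i) = \sum_i \sigma_b(i) = \binom{n+1}{2}$ to upgrade pointwise inequality to equality along each edge.
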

\begin{proof}
Let us check that when $T$ is in $P$-echelon form, we have 
$$
\prod_{i = 1}^n T(i, \sigma_2(i), \ldots, \sigma_d(i)) = 0
$$
unless $\sigma_2, \ldots, \sigma_d \in S_n$ are identity permutations. Consider any element $a$ comparable with $1$ in $P$, assume $a <_P 1$. 
Then by the echelon condition we have $T(i_1, \ldots, i_a, \ldots) = 0$ unless $i_1 \ge i_a$. Hence $\prod_{i = 1}^n T(i, \ldots, \sigma_a(i), \ldots ) = 0$ unless $i \ge \sigma_a(i)$ for {\it all} $i \in [n]$, and so we must have $\sigma_a = (1, \ldots, n)$. The case of an opposite relation $1 <_P a$ follows similarly as we would have $i \le \sigma_a(i)$ for all $i \in [n]$. We showed that for all  $a$ comparable with $1$ in $P$, the permutation $\sigma_a$ must be the identity. Since the Hasse diagram of $P$ is connected, by repeating the same argument to elements in any connected order (e.g. depth-first-search on the Hasse diagram) we obtain that the same holds for all permutations $\sigma_{2}, \ldots, \sigma_d$.
\end{proof}

\begin{corollary}
Let $T \in \mathsf{T}^d(n)$ be in $P$-echelon form with nonzero diagonal, i.e. $T(i, \ldots, i) \ne 0$ for all $i \in [n]$. Then  $\mathrm{oprank}(T) = n$. 
\end{corollary}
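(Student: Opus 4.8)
The plan is to combine the echelon evaluation of the hyperdeterminant with the rank lower bound already established in Theorem~\ref{sps}. Since $T$ is in $P$-echelon form by hypothesis, Proposition~\ref{echelon} applies verbatim and gives
$$
\det(T) = \prod_{i=1}^n T(i,\ldots,i).
$$
By assumption each diagonal entry $T(i,\ldots,i)$ is nonzero, and over a field a product of nonzero elements is nonzero; hence $\det(T) \ne 0$.

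Next I would invoke Theorem~\ref{sps} in its contrapositive form: $\det(T) \ne 0$ forces the odd partition rank to be full, i.e.\ $\mathrm{oprank}(T) = n$. (Note that this step works uniformly in the parity of $d$, so no separate treatment of odd and even $d$ is needed, as Theorem~\ref{sps} already handles both.) Together with the general upper bound $\mathrm{oprank}(T) \le n$ recorded in the introduction, we obtain the claimed equality.

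There is essentially no real obstacle here, since both ingredients are already in place. The only point that deserves a moment of care is the passage from "each diagonal entry is nonzero" to "$\det(T) \ne 0$": this uses that the product of nonzero scalars is nonzero, which is automatic over a field but, in the more general commutative-ring setting mentioned in the remarks, would require the coefficient ring to be an integral domain (or at least that the specific product of diagonal entries avoid zero divisors).
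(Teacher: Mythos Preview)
Your proposal is correct and matches the paper's intended argument: the corollary is stated without proof precisely because it follows immediately from Proposition~\ref{echelon} (giving $\det(T)=\prod_i T(i,\ldots,i)\ne 0$) together with Theorem~\ref{sps}. Your additional remark about needing an integral domain in the commutative-ring setting is a nice observation but not required for the stated result over a field.
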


We say that tensors $T, T' \in \mathsf{T}^d(n)$ are {\it equivalent} 
if $T = (A_1,\ldots, A_d) \cdot T'$ for some $A_1, \ldots, A_d \in \mathrm{GL}(n)$. 
Let us say that $T$ is {\it reducible} to a $P$-echelon form if $T$ is equivalent to $T'$ for some $T'$ in $P$-echelon form. 

\begin{problem}
Characterize tensors reducible to a $P$-echelon form. 
\end{problem}

\begin{remark}
Even though tensors in $P$-echelon form have a trivial hyperdeterminant depending only on diagonal elements, 
they 
have  more complicated structure than diagonal tensors; e.g. 
 there are tensors in $P$-echelon form which are not equivalent to diagonal tensors.  
\end{remark}

\section{Colored sum-ordered sets}  
Let $p$ be a prime. Recall from the introduction that a {$d$-colored sum-ordered set} in $\mathbb{F}^n_p$  
of {size} $N$ 
is an ordered collection $X = (x^{(\ell)}_{i})_{\ell \in [d], i \in [N]}$  of vectors $x^{(\ell)}_{i} \in \mathbb{F}^n_p$ 
such that 
$
\sum_{\ell = 1}^d x^{(\ell)}_{i} = 0 
$
for $i \in [N]$, 
and for $i_1,\ldots, i_d \in [N]$ 
$$
\sum_{\ell = 1}^d x^{(\ell)}_{i_{\ell}} = 0
\implies 
(i_1, \ldots, i_d) \in \mathcal{O}_{N}(P)
$$
for some poset $P$ on $[d]$ with a connected Hasse diagram. 
Assuming $d,p$ are constants we show that the maximal size $N$ of a $d$-colored sum-ordered set is exponentially smaller $p^n$.

\begin{theorem}\label{thm:caps}
Let $d > 2$ and $N$ be size of a $d$-colored sum-ordered set in $\mathbb{F}^n_p$.
Then $N = O(\gamma^n)$ for a constant $1 \le \gamma < p$.
\end{theorem}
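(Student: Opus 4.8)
The plan is to run Tao's slice rank method, but with the $P$-echelon form and the characteristic-$p$ partition-rank bound in place of the usual diagonal-tensor argument. Given a $d$-colored sum-ordered set $X = (x^{(\ell)}_i)$ of size $N$ with associated poset $P$, I would first build the Croot--Lev--Pach tensor $T \in \mathsf{T}^d(N)$ by setting
$$
T(i_1,\ldots, i_d) = \prod_{j=1}^n \Bigl(1 - \bigl(\textstyle\sum_{\ell=1}^d x^{(\ell)}_{i_\ell}(j)\bigr)^{p-1}\Bigr),
$$
where $x^{(\ell)}_{i_\ell}(j) \in \mathbb{F}_p$ denotes the $j$-th coordinate of the vector $x^{(\ell)}_{i_\ell}$. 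By Fermat's little theorem each factor equals $1$ or $0$ according to whether the $j$-th coordinate of $\sum_\ell x^{(\ell)}_{i_\ell}$ vanishes, so $T(i_1,\ldots,i_d) = 1$ exactly when $\sum_\ell x^{(\ell)}_{i_\ell} = 0$ and $T(i_1,\ldots,i_d) = 0$ otherwise.

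Next I would read off two structural facts about $T$. First, by the diagonal condition $\sum_\ell x^{(\ell)}_i = 0$ we get $T(i,\ldots,i) = 1$ for all $i \in [N]$. Second, the defining implication of a sum-ordered set says that $T(i_1,\ldots,i_d) \ne 0$ forces $(i_1,\ldots,i_d) \in \mathcal{O}_N(P)$, i.e. $i_a \le i_b$ whenever $a <_P b$; since $P$ has a connected Hasse diagram, this is exactly the statement that $T$ is in $P$-echelon form. Proposition~\ref{echelon} then gives $\det(T) = \prod_{i=1}^N T(i,\ldots,i) = 1 \ne 0$, and applying Theorem~\ref{ffthm} to $T \in \mathsf{T}^d(N)$ yields the lower bound $\mathrm{prank}(T) \ge N/(p-1)$.

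It remains to produce a matching upper bound on the rank, and here I would use the degree of $T$ as a polynomial. Since $\mathrm{prank}(T) \le \srank(T)$, it suffices to bound the slice rank. Expanding the product, each monomial of $T$ has total degree at most $n(p-1)$ in the coordinate variables, distributed among the $d$ color blocks, so some block $\ell$ carries degree at most $n(p-1)/d$ (and each variable appears to degree at most $p-1$). Grouping the monomials according to such a minimal-degree block gives a slice decomposition showing
$$
\srank(T) \le d \cdot M, \qquad M := \#\Bigl\{(a_1,\ldots,a_n) \in \{0,\ldots,p-1\}^n : \textstyle\sum_j a_j \le n(p-1)/d\Bigr\},
$$
and combining with the lower bound yields $N \le (p-1)\,d\,M$.

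The last and main step is the asymptotic estimate $M = O(\gamma^n)$ with $1 \le \gamma < p$. This is the Ellenberg--Gijswijt counting bound: the exponents $a_j$ range over $\{0,\ldots,p-1\}$ with mean $(p-1)/2$, while the constraint forces their average below $(p-1)/d$; because $d > 2$ we have $(p-1)/d < (p-1)/2$, so $M$ counts a genuine lower tail and a standard Cram\'er/generating-function argument gives $M = O(\gamma^n)$ with $\gamma = \min_{0 < x \le 1} x^{-(p-1)/d}(1 + x + \cdots + x^{p-1}) < p$. This is precisely where the hypothesis $d > 2$ enters; for $d = 2$ the threshold equals the mean and no exponential saving occurs. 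Since $d$ and $p$ are constants, $N = O(\gamma^n)$ as claimed. I expect this tail estimate, together with checking the slice decomposition cleanly, to be the only genuinely technical part, the conceptual work being the recognition that the echelon form makes $\det(T)$ a nonzero product of diagonal ones.
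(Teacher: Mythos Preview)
Your proposal is correct and follows essentially the same route as the paper: build the Croot--Lev--Pach tensor $T$, observe it is in $P$-echelon form with unit diagonal so that $\det(T)=1$ via Proposition~\ref{echelon}, invoke Theorem~\ref{ffthm} for the lower bound $\mathrm{prank}(T)\ge N/(p-1)$, bound $\srank(T)$ above by the monomial count $d\cdot M$, and finish with the Chernoff/Cram\'er tail estimate $M\le\gamma^n$ using $d>2$. The paper's only cosmetic difference is that it states the lower bound for $\srank$ rather than $\mathrm{prank}$, but it derives it the same way through $\srank\ge\mathrm{prank}\ge N/(p-1)$.
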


To prove this result we use Tao's slice rank method with the rank bounds obtained above. 
Let 
$X $ be a $d$-colored sum-ordered set in $\mathbb{F}^n_p$ of size $N$. 
Consider the polynomial $Q : (\mathbb{F}^n_p)^d \to \mathbb{F}_p$ given by 
$$
Q(z_1, \ldots, z_d) = \prod_{i = 1}^n \left(1 -  f_i(z_1,\ldots, z_d)^{p-1} \right), \qquad f_i = z_1(i) + \ldots + z_d(i), 
$$
where a variable $z(i)$ denotes the $i$-th coordinate of the vector $z$.  
Consider the restriction tensor $T = Q |_{X} : [N]^d \to \mathbb{F}_p$ given by 
$$
T(i_1,\ldots, i_d) = Q(x^{(1)}_{i_1}, \ldots, x^{(d)}_{i_d}) = 
\begin{cases}
	1, & \text{ if } \sum_{\ell = 1}^d x^{(\ell)}_{i_{\ell}} = 0, \\ 
	0, & \text{ otherwise}.
\end{cases}
$$

\begin{lemma}
We have: 
$\srank(T) \ge N/(p-1)$. 
\end{lemma}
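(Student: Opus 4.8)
The plan is to analyze the structure of the tensor $T$ on its diagonal and off-diagonal parts, and then use the $P$-echelon structure together with the rank bounds established earlier. The key observation is that $T(i_1,\ldots,i_d) = 1$ exactly when $\sum_\ell x^{(\ell)}_{i_\ell} = 0$, and by the defining condition of a $d$-colored sum-ordered set this forces $(i_1,\ldots,i_d) \in \mathcal{O}_N(P)$ for the relevant poset $P$. In particular, the diagonal $T(i,\ldots,i)$ is always $1$ (since $\sum_\ell x^{(\ell)}_i = 0$ for each $i$), while the support of $T$ is constrained to lie in the order polytope. This means $T$ is essentially in $P$-echelon form, at least after observing that the support condition $(i_1,\ldots,i_d)\in \mathcal{O}_N(P)$ translates precisely into the inequalities $i_a \le i_b$ for $a <_P b$ defining Definition~\ref{def:echelon}.

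First I would verify that $T$ is in $P$-echelon form with nonzero (in fact all-ones) diagonal. The support condition gives $T(i_1,\ldots,i_d) = 0$ unless $i_a \le i_b$ whenever $a <_P b$, which is exactly the echelon condition, and the diagonal entries are all equal to $1$. Next I would apply Proposition~\ref{echelon} to compute $\det(T) = \prod_{i=1}^N T(i,\ldots,i) = 1 \ne 0$. Then I would invoke Theorem~\ref{ffthm} (or its slice-rank specialization): since $\det(T) \ne 0$ over $\mathbb{F}_p$, we obtain the lower bound $\mathrm{prank}(T) \ge N/(p-1)$. Because the statement to be proved concerns the slice rank, I would use that $\srank(T) \ge \mathrm{prank}(T)$, or more directly that the chain of inequalities relating these ranks yields $\srank(T) \ge N/(p-1)$.

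The main subtlety, and the step I expect to require the most care, is the translation between the order-polytope membership condition and the $P$-echelon form, and correspondingly which rank bound to cite. The order polytope $\mathcal{O}_N(P)$ imposes $i_a \le i_b$ for $a <_P b$, matching the echelon condition, so applying Proposition~\ref{echelon} is direct once the connectedness of the Hasse diagram (guaranteed in the definition) is noted. The more delicate point is that $\det(T) \ne 0$ yields a lower bound on $\mathrm{oprank}$ or $\mathrm{prank}$, and one must correctly pass to $\srank$. Using Theorem~\ref{sps} when $d$ is even gives $\srank(T) = N$ directly from $\det(T)\ne 0$, which is even stronger; but to obtain the stated bound $\srank(T) \ge N/(p-1)$ uniformly (including odd $d$, where over $\mathbb{F}_p$ the relevant inequality involves the factor $p-1$), the cleanest route is via Theorem~\ref{ffthm} combined with the inequality $\mathrm{prank}(T) \le \srank(T)$. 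I would therefore state: $\det(T) = 1 \ne 0$ forces $\mathrm{prank}(T) \ge N/(p-1)$ by Theorem~\ref{ffthm}, and since $\mathrm{prank}(T) \le \srank(T)$, we conclude $\srank(T) \ge N/(p-1)$ as claimed.
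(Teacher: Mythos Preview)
Your proposal is correct and follows essentially the same route as the paper: observe that $T$ is in $P$-echelon form with diagonal entries all equal to $1$, apply Proposition~\ref{echelon} to get $\det(T)=1\ne 0$, and then invoke Theorem~\ref{ffthm} together with $\mathrm{prank}(T)\le\srank(T)$ to conclude $\srank(T)\ge N/(p-1)$. Your discussion of the subtleties (even versus odd $d$, and which rank bound to cite) is more thorough than the paper's own one-line argument, but the underlying logic is the same.
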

\begin{proof}
Since $X$ is a $d$-colored sum-ordered set, we have $T(i_1,\ldots, i_d) = 0$ unless $(i_1,\ldots, i_d) \in \mathcal{O}_N(P)$ for a poset $P$ on $[d]$ with a connected Hasse diagram. Hence, $T$ is in $P$-echelon form,  see def.~\ref{def:echelon}. Note that $T(i, \ldots, i) = 1$ for all $i \in [N]$. Therefore, by Prop.~\ref{echelon} we have $\det(T) = 1$ and so by Theorem~\ref{ffprank} we get $\srank(T) \ge \mathrm{prank}(T) \ge N/(p-1)$.
\end{proof}
\begin{remark}
In fact we also proved that $\mathrm{oprank}(T) = N$, and if $d$ is even $\mathrm{srank}(T) = N$, 
but the presented bound is good enough to establish the needed result. 
\end{remark}
 
\begin{lemma}
We have: 
$$\srank(T) \le d \cdot c(n), \qquad c(n) := |\left\{\alpha \in \{0,\ldots, p-1\}^n : |\alpha| \le (p-1)n/d \right\}|.$$
\end{lemma}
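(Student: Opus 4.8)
The plan is to bound the slice rank of $T$ from above using the standard polynomial-degree argument at the heart of Tao's slice rank method. The key observation is that $Q$ has low degree: since each factor $1 - f_i^{p-1}$ has degree $p-1$ in the combined variables and there are $n$ of them, $Q$ has total degree at most $(p-1)n$ as a polynomial in the coordinates of $z_1, \ldots, z_d$. The monomials appearing in $Q$ are products of monomials in each block of variables $z_\ell$, and because we work over $\mathbb{F}_p$ we may reduce every exponent to lie in $\{0, \ldots, p-1\}$ (using $x^p = x$ on $\mathbb{F}_p$). Thus each monomial has the form $\prod_{\ell=1}^d m_\ell(z_\ell)$ where $m_\ell$ is a monomial in the $n$ coordinates of $z_\ell$ with exponent vector $\alpha^{(\ell)} \in \{0,\ldots,p-1\}^n$.

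First I would expand $Q$ into such reduced monomials and group them by which block $\ell$ carries the largest degree. For any monomial of total degree at most $(p-1)n$, by pigeonhole at least one of the $d$ blocks must carry degree at most $(p-1)n/d$; I would assign each monomial to one such block $\ell$. Grouping all monomials assigned to block $\ell$, I can factor $Q$ (restricted to $X$, i.e. $T$) as a sum over $\ell \in [d]$ of terms, where the $\ell$-th term is a sum over exponent vectors $\alpha \in \{0,\ldots,p-1\}^n$ with $|\alpha| \le (p-1)n/d$ of a product $g_{\ell,\alpha}(x^{(\ell)}_{i_\ell}) \cdot h_{\ell,\alpha}(\text{the other } i_{\ell'})$. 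Each such product is a simple (slice-decomposable) tensor in the $\ell$-th direction, since $g_{\ell,\alpha}$ depends only on the index $i_\ell$. Hence each group contributes at most $c(n)$ to the slice rank, and summing over the $d$ directions gives $\srank(T) \le d \cdot c(n)$ by subadditivity (Proposition~\ref{rankbasic}(ii)).

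The main obstacle, and the step requiring the most care, is the bookkeeping in the monomial assignment: I must ensure that each monomial is assigned to exactly one block so that the decomposition is a genuine sum (no double counting) and that the total count of distinct ``slice directions'' used in block $\ell$ is controlled by the number of admissible exponent vectors $\alpha$, namely $c(n)$. Concretely, after fixing the block $\ell$ and the exponent vector $\alpha = \alpha^{(\ell)}$ on that block, all remaining monomials sharing these data collapse into a single slice of the form $(\text{function of } i_\ell) \times (\text{function of the rest})$, so the number of slices in direction $\ell$ is at most the number of valid $\alpha$, which is exactly $c(n)$. I would verify that the reduction $x^p = x$ is legitimate here because $T$ is only the evaluation of $Q$ on the finite set $X \subseteq (\mathbb{F}_p^n)^d$, so replacing $Q$ by its reduced (multilinear-in-each-coordinate-up-to-$p-1$) representative does not change $T$. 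The final bound $N = O(\gamma^n)$ with $\gamma < p$ then follows by combining this upper bound with the lower bound $\srank(T) \ge N/(p-1)$ and estimating $c(n)$ via a standard large-deviation (entropy) count showing $c(n) = O(\gamma'^n)$ for some $\gamma' < p$ strictly, since the constraint $|\alpha| \le (p-1)n/d$ cuts off a exponentially small fraction of the full cube $\{0,\ldots,p-1\}^n$ when $d > 2$.
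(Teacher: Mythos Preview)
Your proposal is correct and follows essentially the same argument as the paper: expand $Q$ into monomials, use pigeonhole on the total degree $(p-1)n$ to find a block $\ell$ with $|\alpha^{(\ell)}|\le (p-1)n/d$, and collect monomials by that block to exhibit at most $d\cdot c(n)$ slice-rank-one summands. One minor remark: the reduction $x^p=x$ you mention is unnecessary here, since each variable $z_\ell(i)$ appears only in the single factor $1-f_i^{p-1}$ and hence already carries exponent at most $p-1$ in the monomial expansion of $Q$.
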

\begin{proof}
First,  viewing $Q$ as a $d$-tensor in $\mathsf{T}^d(p^n)$, we have $\srank(T) \le \srank(Q)$. Consider the monomial expansion of the polynomial 
$$
Q(z_1,\ldots, z_d) = \sum_{\alpha_1, \ldots, \alpha_d \in \{0,\ldots, p-1 \}^n } c_{\alpha_1, \ldots, \alpha_d}\, z_1^{\alpha_1} \ldots z_{d}^{\alpha_d},
$$
where we use the notation $z^{\alpha} = z(1)^{\alpha(1)} \cdots z(n)^{\alpha(n)}$ for $\alpha = (\alpha(1), \ldots, \alpha(n)) \in \{0,\ldots,p-1 \}^n$. 
Since $\deg(Q) = (p-1)n$, for each monomial $z_1^{\alpha_1} \ldots z_{d}^{\alpha_d}$ there is an index $k \in [d]$ such that $|\alpha_k| \le (p-1)n/d$. Hence, we can represent $Q$ w.r.t. such indices $k$ as follows
$$
Q = \sum_k \sum_{\alpha_k} z_k^{\alpha_k} Q_{\alpha_k}(z_1,\ldots, z_{k-1}, z_{k+1}, \ldots, z_d)
$$
where each term in this sum has slice rank $1$. Therefore, $Q$ has slice rank at most the number of such terms which is bounded above by $d \cdot c(n)$ 
and the claim follows.
\end{proof}

\begin{lemma} We have: 
$
c(n) \le \gamma^n$ for a constant $1 \le \gamma < p.$
\end{lemma}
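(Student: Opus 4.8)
The plan is to use the exponential moment (Chernoff-type) method. Fix a parameter $t \in (0,1)$ and write $M = (p-1)n/d$. Since $t^{|\alpha| - M} \ge 1$ whenever $|\alpha| \le M$, I would bound
$$
c(n) = \sum_{\alpha:\, |\alpha| \le M} 1 \;\le\; t^{-M} \sum_{\alpha \in \{0,\ldots,p-1\}^n} t^{|\alpha|}.
$$
The sum on the right factors over the $n$ coordinates, giving $\left(\sum_{j=0}^{p-1} t^j\right)^n = \left(\frac{1-t^p}{1-t}\right)^n$. Hence $c(n) \le \gamma(t)^n$, where
$$
\gamma(t) = t^{-(p-1)/d}\,\bigl(1 + t + \cdots + t^{p-1}\bigr).
$$

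Next I would locate a value $t_0 \in (0,1)$ for which $\gamma(t_0) < p$. Observe that $\gamma(1) = 1 \cdot p = p$, so it suffices to show that $\gamma$ is strictly increasing at $t = 1$; then moving $t$ slightly below $1$ strictly decreases $\gamma$ below $p$. Writing $g(t) = 1 + t + \cdots + t^{p-1}$, so that $g(1) = p$ and $g'(1) = 1 + 2 + \cdots + (p-1) = \binom{p}{2}$, I compute
$$
\left.\frac{d}{dt}\log\gamma(t)\right|_{t=1} = -\frac{p-1}{d} + \frac{g'(1)}{g(1)} = -\frac{p-1}{d} + \frac{p-1}{2} = (p-1)\left(\tfrac12 - \tfrac1d\right).
$$

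The crucial point is that this quantity is strictly positive precisely because $d > 2$; this is where the hypothesis enters, and it is really the only step that is not bookkeeping. Since $\log\gamma$ is increasing at $t = 1$, there exists $t_0 \in (0,1)$ with $\gamma(t_0) < \gamma(1) = p$. Setting $\gamma := \gamma(t_0)$ then gives $c(n) \le \gamma^n$ with $\gamma < p$, while $\gamma \ge 1$ holds automatically since $c(n) \ge 1$ (the zero tuple always satisfies the constraint). I do not expect a genuine obstacle here, as this is a routine large-deviation estimate; the one thing to get right is the sign computation above, which also explains transparently why the argument fails at $d = 2$—there $M = (p-1)n/d$ is exactly the mean of $|\alpha|$—and succeeds for all $d > 2$.
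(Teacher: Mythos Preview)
Your proposal is correct and is essentially the paper's own argument: the paper applies Chernoff's bound to i.i.d.\ uniform variables on $\{0,\ldots,p-1\}$, obtaining $\gamma(t) = e^{t(p-1)/d}\frac{1-e^{-pt}}{1-e^{-t}}$, which under the substitution $s = e^{-t}$ becomes exactly your $\gamma(s) = s^{-(p-1)/d}(1+s+\cdots+s^{p-1})$. The paper then simply asserts that $\min_{t\ge 0}\gamma(t) \in [1,p)$ is ``routine to verify'' for $d\ge 3$, whereas you actually carry out this verification via the derivative computation at $t=1$; your added detail is a welcome improvement and makes explicit why the hypothesis $d>2$ is needed.
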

\begin{proof}
Let $X_i$ be i.i.d. discrete random variables uniformly distributed on $\{0,\ldots,  p-1\}$. 
Then using Chernoff's bound for $t \ge 0$ we have  
\begin{align*}
\mathbb{P}\left(X_1 + \ldots + X_n \le {(p-1)n}/{d} \right) 
\le \frac{\mathbb{E}[e^{-t(X_1+\ldots + X_n)}]}{e^{-t{(p-1)n}/{d}}} 
= \frac{\gamma(t)^n}{p^n}, \quad \gamma(t) :=  \frac{e^{t(p-1)/d}(1 - e^{-pt})}{1 - e^{-t}},
\end{align*}
which is obtained from 
$\mathbb{E}[e^{-tX}] = \frac{1 - e^{-pt}}{p(1 - e^{-t})}$.
Therefore, 
$$
c(n)  \le \gamma^n, \qquad \gamma = \min_{t \ge 0} \gamma(t).
$$
It is routine to verify that indeed $\min_{t \ge 0} \gamma(t) \in [1, p)$ for $d \ge 3$. 
\end{proof}

The above lemmas now imply that $N \le (p-1)d \cdot \gamma^n$ which proves Theorem~\ref{thm:caps}.  \qed

\section{On ranks additivity under direct sums}
In 1973, Strassen \cite{strassen} conjectured that the tensor rank is additive under direct sums. In~2019, Shitov \cite{shitov} disproved this conjecture. Very recently, Gowers \cite{gow} proved that 
the slice rank is additive under direct sums. We conjecture the same for the odd partition rank and show the following special case.


\begin{proposition}
Let $d$ be even, $X \in \mathsf{T}^d(n),$ $Y \in \mathsf{T}^d(m)$ with $\mathrm{oprank}(X) = r$, $\mathrm{oprank}(Y) = s$. Assume that $X$ is not $r$-null and $Y$ is not $s$-null 
(see Sec.~\ref{sec:generic}). 
Then $\mathrm{oprank}(X \oplus Y) = r + s$. 
\end{proposition}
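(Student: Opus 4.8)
The plan is to prove the two inequalities $\mathrm{oprank}(X \oplus Y) \le r + s$ and $\mathrm{oprank}(X \oplus Y) \ge r + s$ separately. The upper bound is the easy direction: any odd-partition-rank decompositions $X = \sum_{\ell=1}^r S^{(\ell)}$ and $Y = \sum_{\ell=1}^s R^{(\ell)}$ into simple tensors can be transported into $\mathsf{T}^d(n+m)$ via the direct sum structure. Each $S^{(\ell)}$, supported on $[n]^d$, and each $R^{(\ell)}$, supported on $[n+1,n+m]^d$, remains a single simple tensor of the same partition type when viewed inside the larger index set (padding with zeros preserves the factored form $T_1 \cdot T_2$). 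Summing these $r+s$ simple tensors recovers $X \oplus Y$, so by sub-additivity (Proposition~\ref{rankbasic}(ii)) we get $\mathrm{oprank}(X \oplus Y) \le r + s$.

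The lower bound is where the hypotheses on non-nullity enter, and I expect this to be the main obstacle. The idea is to use the hyperdeterminant as a certificate. Since $X$ is not $r$-null, there exist index sets $I_1,\ldots,I_d \in \binom{[n]}{r}$ with $\det(X_{I_1,\ldots,I_d}) \ne 0$; similarly there are $J_1,\ldots,J_d \in \binom{[n+1,n+m]}{s}$ with $\det(Y_{J_1,\ldots,J_d}) \ne 0$. First I would form the combined index sets $K_\ell = I_\ell \cup J_\ell \in \binom{[n+m]}{r+s}$ and consider the subtensor $(X \oplus Y)_{K_1,\ldots,K_d}$. By the direct sum structure this subtensor is exactly $X_{I_1,\ldots,I_d} \oplus Y_{J_1,\ldots,J_d}$, so by the direct sum product formula (Sec.~\ref{moreprod}) its hyperdeterminant equals $\det(X_{I_1,\ldots,I_d}) \cdot \det(Y_{J_1,\ldots,J_d}) \ne 0$. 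Hence $X \oplus Y$ is not $(r+s)$-null.

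Now I would invoke the contrapositive of the rank bound. By Lemma~\ref{d20} every odd-partition-rank-$1$ simple tensor is $2$-null, so applying Lemma~\ref{generic} with $k=2$ to the subtensor $(X\oplus Y)_{K_1,\ldots,K_d} \in \mathsf{T}^d(r+s)$: its nonzero hyperdeterminant forces $\mathrm{oprank}\big((X\oplus Y)_{K_1,\ldots,K_d}\big) \ge r+s$ (here $n$ in that lemma is $r+s$ and $k-1=1$). Finally, monotonicity of the rank function under restriction (Proposition~\ref{rankbasic}(i), which requires the index sets to have equal cardinality $r+s$) gives $\mathrm{oprank}(X \oplus Y) \ge \mathrm{oprank}\big((X\oplus Y)_{K_1,\ldots,K_d}\big) \ge r+s$. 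Combined with the upper bound this yields the claimed equality. The delicate points are confirming that restriction preserves the odd-partition-rank structure (so that Proposition~\ref{rankbasic}(i) applies with $\mathsf{S}_{\mathcal{I}} \subseteq \mathsf{S}$) and verifying the sign bookkeeping in the direct sum formula, but both follow from the fact that the defining partition block structure is unaffected by deleting coordinate values.
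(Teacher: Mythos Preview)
Your proposal is correct and follows essentially the same argument as the paper: choose subtensors $X' = X_{I_1,\ldots,I_d}$ and $Y' = Y_{J_1,\ldots,J_d}$ with nonzero hyperdeterminants, use the direct sum product formula to get $\det(X' \oplus Y') \ne 0$, deduce full oprank via Theorem~\ref{sps} (equivalently your Lemma~\ref{d20} + Lemma~\ref{generic}), and conclude by monotonicity. Your write-up is in fact more explicit than the paper's about the index sets and the monotonicity step.
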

\begin{proof}
It is easy to see that $\mathrm{oprank}(X \oplus Y) \le r + s$.
Let $X' \in \mathsf{T}^d(r)$ and $Y' \in \mathsf{T}^d(s)$ be subtensors of $X$ and $Y$ having nonzero hyperdeterminants. By Theorem~\ref{sps}, we have $\mathrm{oprank}(X') = r$ and $\mathrm{oprank}(Y') = s$. As discussed in remark~\ref{moreprod} we have $\det(X' \oplus Y') = \det(X') \cdot \det(Y') \ne 0$ and hence $\mathrm{oprank}(X' \oplus Y') = r + s$ is full. But $X' \oplus Y'$ is a subtensor of $X \oplus Y$ and we have $\mathrm{oprank}(X' \oplus Y') \le \mathrm{oprank}(X \oplus Y)$ which gives the result.
\end{proof}

In \cite{gow} the same question is posed about the partition rank with a guess that it is not additive for direct sums. The odd partition rank can be viewed as an intermediate function between partition and slice ranks, and we guess (supported by the special case above) that it is `closer' to the slice rank in this regard. 
More generally, it seems interesting to explore the additivity of rank functions $\mathrm{rank}_{\mathsf{S}}(\cdot)$. 

\section*{Acknowledgements}
We are grateful to Askar Dzhumadil'daev for many inspiring conversations. 

\newpage

\end{document}